\documentclass[11pt,leqno]{article}
\usepackage[T1]{fontenc}
\usepackage{lipsum}
\usepackage{eso-pic}
\usepackage[utf8]{inputenc}
\usepackage[english]{babel}
\usepackage{tikz}
	
\usepackage{graphicx}
\usetikzlibrary{calc}
\usetikzlibrary{intersections}
\usepackage{verbatim}
\usepackage{etoolbox}
\usepackage{tkz-euclide}
\usepackage{geometry}
\geometry{margin=1.3in}
\usepackage{mathtools}
\usepackage{color}
\usepackage[title]{appendix}
\usepackage[all]{xy}
\usepackage{tikz-cd}
\usepackage{blindtext}
\usepackage[T1]{fontenc}
\usepackage{amsthm}
\usepackage{amsfonts}
\usepackage{txfonts}
\usepackage{palatino,amssymb,epsfig}
\usepackage{latexsym,epsf,epic,amscd}
\usepackage{mathrsfs}
\usepackage{graphicx}
\usepackage{caption}
\usepackage{subcaption}
\usepackage{appendix}
\usepackage{amsmath}
\usepackage{bookmark}
\hypersetup{
colorlinks=true,
linkcolor=black,
citecolor=black,
anchorcolor=black,
urlcolor=black}
\allowdisplaybreaks[4]
\numberwithin{equation}{section}
\DeclareMathOperator{\Hessian}{Hess}

\DeclareMathOperator{\interior}{int}
\geometry{a4paper}
    \newcommand{\Addresses}{{
  \bigskip
  \footnotesize
  \noindent Te Ba, \href{batexu@hnu.edu.cn}{batexu@hnu.edu.cn}
  \newline\textit{School of Mathematics, Hunan University, Changsha 410082, P.R. China}\par\nopagebreak
  \medskip
  \noindent Shengyu Li, \href{lishengyu@hnu.edu.cn}{lishengyu@hnu.edu.cn}
  \newline\textit{School of Mathematics, Hunan University, Changsha 410082, P.R. China}\par\nopagebreak
  \medskip
  \noindent Yaping Xu, \href{xuyaping@hnu.edu.cn}{xuyaping@hnu.edu.cn}
  \newline\textit{School of Mathematics, Hunan University, Changsha 410082, P.R. China}\par\nopagebreak
}}
\title{Rigidity of bordered polyhedral surfaces}
\author{Te Ba\and Shengyu Li\and Yaping Xu}
\date{}
\newtheorem{theorem}{Theorem}[section]
\newtheorem{lemma}[theorem]{Lemma}
\newtheorem{proposition}[theorem]{Proposition}
\newtheorem{corollary}[theorem]{Corollary}
\theoremstyle{definition}
\newtheorem{definition}[theorem]{Definition}
\newtheorem{remark}[theorem]{Remark}
\begin{document}
\maketitle

\begin{abstract}
This paper investigates the rigidity of bordered polyhedral surfaces. Using the variational principle, we show that bordered polyhedral surfaces are determined by boundary value and discrete curvatures on the interior edges. As a corollary, we reprove the classical result that two Euclidean cyclic polygons (or hyperbolic cyclic polygons) are congruent if the lengths of their sides are equal.

\medskip
\noindent\textbf{Mathematics Subject Classification (2020)}: 52C25, 52C26, 51M04, 51M09.
\end{abstract}

\section{Introduction}
\subsection{Background}
A Euclidean (or spherical or hyperbolic) polyhedral surface is a triangulated surface $(S,T)$ with a metric, called a polyhedral metric, so that each triangle in the triangulation is isometric to a Euclidean (or spherical or hyperbolic) triangle. Let $E$, $V$ be the set of edges and vertices of $(S,T)$, the polyhedral metric can be identified as a function $d:E\to\mathbb{R}_{+}$ satisfying the triangle inequality.
The rigidity problem asks which geometric data on $(S,T)$ can determine $d$ up to isometry. Over the past decades, there are many results related to this topic. See e.g. \cite{Luo3,Luo1,Luo2,Guo,Glick,GT,Gu1,Gu2,Xu,Xu3}.
Let $\mathbb{E}^{2}$, $\mathbb{H}^{2}$, $\mathbb{S}^{2}$ be the Euclidean, the hyperbolic, and the spherical $2$-dimensional geometries, respectively. Luo \cite{Luo3,Luo1} introduced the following two families of discrete curvatures.
\begin{definition}\label{def1}
Let $(S,T)$ be a closed triangulated surface. Given a $\mathrm{K}^{2}$ polyhedral metric $d$ on $(S,T)$ where $\mathrm{K}^{2}=\mathbb{E}^{2}$, or $\mathbb{H}^{2}$, or $\mathbb{S}^{2}$, for any $h\in\mathbb{R}$, the $\phi_{h}$-curvature of $d$ is defined by a function $\phi_{h}: E\rightarrow\mathbb{R}$ sending an edge $e$ to
\[\phi_{h}(e)=\int_{a}^{\pi/2}\sin^{h}t\textit{d}t+\int_{a'}^{\pi/2}\sin^{h}t\textit{d}t.\]
The $\psi_{h}$-curvature of $d$ is defined by a function $\psi_{h}: E\rightarrow\mathbb{R}$ sending an edge $e$ to
\[\psi_{h}(e)=\int_{0}^{(b+c-a)/2}\cos^{h}t\textit{d}t+\int_{0}^{(b'+c'-a')/2}\cos^{h}t\textit{d}t,\]
where $a,a'$ are the angles facing the edge $e$ and $b,c,b',c'$ are inner angles adjacent to the edge $e$.
\end{definition}

\begin{figure}[htbp]
\centering

\includegraphics[scale=0.3]{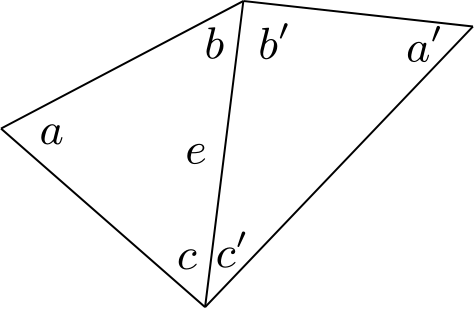}
\caption{\small Edge-angle relation in Definition \ref{def1}.}
\label{fig1}

\end{figure}

Note that some choices for the parameter $h$ are interesting. For example, when $h=0$, one reads
\[\phi_{0}(e)=\pi-a-a'\quad\text{and}\quad\psi_{0}(e)=\frac{1}{2}(b+b'+c+c'-a-a').\]
The geometric meaning of them is related to the dihedral angle along edges of a hyperbolic polyhedron associated to the polyhedral surface. In addition, the curvature
\[\phi_{-2}(e)=\cot a+\cot a'\]
is closely related to the discrete (cotangent) Laplacian operator on a polyhedral surface in Euclidean background geometry. See \cite{Pin,Bo2,Zeng} for more information.
Moreover, a Euclidean or hyperbolic polyhedral surface is Delaunay if and only if $\psi_{h}(e)\geq0$ for all $e\in E$.
\begin{theorem}\cite[Theorem 1.2]{Luo3}\label{th1.1}
Let $(S,T)$ be a closed triangulated surface. For any $h\in\mathbb{R}$, the following statements hold:
\begin{itemize}
\item[$(a)$] A Euclidean polyhedral metric on $(S,T)$ is determined up to scaling by its $\phi_{h}$-curvature or $\psi_{h}$-curvature.
\item[$(b)$] A hyperbolic polyhedral metric on $(S,T)$ is determined by its $\psi_{h}$-curvature.
\item[$(c)$] A spherical polyhedral metric on $(S,T)$ is determined by its $\phi_{h}$-curvature.
\end{itemize}
\end{theorem}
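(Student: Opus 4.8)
The plan is to prove all three parts uniformly by the variational principle, reducing the global rigidity to a single-triangle convexity statement. First I would localize: each interior edge $e$ is shared by exactly two triangles, and by Definition \ref{def1} the value $\phi_h(e)$ (resp. $\psi_h(e)$) is the sum of one contribution from each of these triangles, namely $\int_{\theta}^{\pi/2}\sin^h t\,dt$ (resp. $\int_0^{(\beta+\gamma-\theta)/2}\cos^h t\,dt$), where $\theta$ is the angle of that triangle facing $e$ and $\beta,\gamma$ are its other two angles. Encoding a triangle of the ambient geometry by its edge-length vector $l=(l_1,l_2,l_3)$ in the open set $\Omega\subset\mathbb{R}_{>0}^3$ of admissible lengths, the cosine law makes each angle $\theta_i=\theta_i(l)$ a smooth function, so it suffices to analyze the map sending $l$ to the triple of per-triangle contributions $C_i(l)$ to the edge opposite $\theta_i$.

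Next I would choose, for each geometry and each value of $h$, an edge coordinate $u_i=u(l_i)$ that turns the contribution map into a gradient. Concretely, I want a triangle energy $W(u_1,u_2,u_3)$ with $\partial W/\partial u_i=C_i$, and such $W$ exists precisely when the Jacobian $[\partial C_i/\partial u_j]$ is symmetric. Differentiating the cosine law — in the Euclidean case one has $\partial\theta_i/\partial l_i=l_i/(2A)$ and $\partial\theta_i/\partial l_j=-l_i\cos\theta_k/(2A)$, with $A$ the area — and inserting the weight $-\sin^h\theta_i$ arising from $dC_i=-\sin^h\theta_i\,d\theta_i$, one finds that symmetry forces the substitution $du=l^{-(h+1)}\,dl$ (for $\phi_h$ in $\mathbb{E}^2$), after which $\partial C_i/\partial u_j$ becomes manifestly symmetric via the law of sines $l_i=2R\sin\theta_i$. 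The spherical and hyperbolic cases, and the $\psi_h$-curvature, are handled by the analogous $h$-dependent substitutions adapted to the $\cos$/$\cosh$ laws; note that in $\mathbb{E}^2$ the identity $(\beta+\gamma-\theta)/2=\pi/2-\theta$ makes the $\phi_h$- and $\psi_h$-contributions coincide, which is why both curvatures appear in part $(a)$.

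With $W$ in hand, I would examine the sign of its Hessian, which is the crux of the argument. Writing $p_i=l_i^{h+1}$, one reduces (in $\mathbb{E}^2$) the Hessian of $W$ to $\mathrm{diag}(p)\,\Gamma\,\mathrm{diag}(p)$ up to a positive factor, where $\Gamma$ has $-1$ on the diagonal and $\Gamma_{ij}=\cos\theta_k$ off the diagonal ($k$ the third index); the classical identity $\cos^2\theta_1+\cos^2\theta_2+\cos^2\theta_3+2\cos\theta_1\cos\theta_2\cos\theta_3=1$, valid when $\theta_1+\theta_2+\theta_3=\pi$, gives $\det\Gamma=0$, so $\Gamma$ is negative semidefinite and $W$ is concave with a one-dimensional kernel spanned by the global scaling direction. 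In $\mathbb{S}^2$ the angle sum exceeds $\pi$ and in $\mathbb{H}^2$ it is below $\pi$; the corrected trigonometric identities make the analogous matrix strictly negative definite, but only for the matching curvature — $\phi_h$ paired with $\mathbb{S}^2$ and $\psi_h$ with $\mathbb{H}^2$ — whereas the mismatched pairing yields an indefinite Hessian. I expect this signature analysis, to be carried out uniformly for all real $h$, to be the main obstacle, since it is exactly what selects the geometry/curvature pairings in $(a)$–$(c)$ and the scaling ambiguity in $(a)$.

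Finally I would globalize. Summing the triangle energies over the triangulation yields an energy $\mathcal{E}(u)$ on the space of polyhedral metrics expressed in the edge coordinates $u\colon E\to\mathbb{R}$, whose gradient is exactly the curvature map $\phi_h$ (resp. $\psi_h$). The single-triangle concavity makes the Hessian of $\mathcal{E}$ negative definite — in the Euclidean case, negative definite transverse to the one-dimensional common-scaling direction — so $\mathcal{E}$ is strictly concave (modulo scaling) on its domain. Since the gradient of a strictly concave function on a convex set is injective, the curvature determines $u$, hence $d$, up to isometry, and up to scaling in case $(a)$. To legitimize this global conclusion I would also verify that the admissible configuration space is convex (or at least connected, with the energy extending convexly) in the chosen $u$-coordinates, which needs a short separate argument, most delicate for $\mathbb{S}^2$, where edge lengths are constrained to $(0,\pi)$ with total perimeter below $2\pi$.
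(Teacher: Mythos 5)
Your overall scheme --- localize to triangles, find an $h$-dependent change of edge coordinates under which the per-triangle curvature contributions form a closed $1$-form, integrate to a triangle energy, control the sign of its $3\times 3$ Hessian, and sum over the triangulation so that the curvature map becomes the gradient of a global energy --- is exactly Luo's variational argument. The paper itself does not prove Theorem \ref{th1.1} (it is quoted from \cite{Luo3,Luo1}), but the machinery it builds for the bordered analogue in Sections \ref{3}--\ref{5} is precisely this scheme: your substitution $du=l^{-(h+1)}dl$ and its hyperbolic/spherical analogues are the forms $\omega^{E}_{\phi,s}$, $\omega^{H}_{\psi,s}$, $\omega^{S}_{\phi,s}$; your matrix $\Gamma$ is $-\mathrm{M}$ of Lemma \ref{A} and your Hessian factorization is Lemma \ref{form}; your identification of the Euclidean kernel with the scaling direction, and the remark that $\phi_h=\psi_h$ in $\mathbb{E}^{2}$, are both correct and are used in the paper.

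The genuine gap is in your globalization step. You invoke ``the gradient of a strictly concave function on a convex set is injective'' and defer the convexity of the domain to ``a short separate argument.'' But the image $\xi(\Omega_3)$ of the admissible length space under your nonlinear coordinate change is in general \emph{not} convex, so the segment joining the $u$-coordinates of two metrics with equal curvature may leave the domain where your energy is defined and concave; this is not a routine verification but the central technical point of Luo's proof. The paper's resolution (Section 3.2, following Luo) is to extend the angle functions by constants across the degenerate locus (Lemmas \ref{ex.cont.3}, \ref{ex.cont.2}), verify that the extended $1$-form remains closed (Lemma \ref{closeness}, via Proposition \ref{B1}), and show the extended functional is convex on the honestly convex set $\xi(J^{3})$ because it is linear on each degenerate component (Lemma \ref{convex}, via Proposition \ref{B2}); an alternative, used in Lemma \ref{near}, is to argue only near the endpoints of the segment. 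Until one of these is supplied, the injectivity argument does not close. Two smaller omissions: in the Euclidean case you need connectedness of the surface to glue the per-triangle scaling kernels into a single global scaling parameter (you assert the conclusion without the argument), and the semidefinite-Hessian version of ``gradient injective modulo scaling'' itself needs the segment argument showing the difference vector lies in the kernel at every point of the segment, not just that the restricted second derivative vanishes.
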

\begin{remark}
Rivin \cite{Ri1} and Leibon \cite{Le} proved Theorem \ref{th1.1}$(a)$, Theorem \ref{th1.1}$(b)$ for $h=0$, respectively. In Euclidean geometry, it is easy to see $a+b+c=\pi$, $a'+b'+c'=\pi$, which implies $\phi_{h}(e)=\psi_{h}(e)$.
\end{remark}
A circle packing metric on $(S,T)$ is a polyhedral metric $d:E\to\mathbb{R}_{+}$
so that there is a map $r:V\to\mathbb{R}_{+}$ satisfying $d(uv)=r(u)+r(v)$ whenever the edge $uv$ has end vertices $u$ and $v$. The $k_{h}$-curvature of the circle packing metric was introduced by Luo \cite{Luoarxiv,Luo1}.
\begin{definition}
Let $(S,T)$ be a closed triangulated surface. Given a circle packing metric $r$ on $(S,T)$, for any $h\in\mathbb{R}$, the $k_{h}$-curvature of $r$ is defined by the function $k_{h}:V\rightarrow\mathbb{R}$ sending a vertex $v$ to
\[k_{h}(v)=(2-\frac{m}{2})\pi-\sum_{i=1}^{m}\int_{\pi/2}^{\theta_{i}}\tan^{h}(t/2)\textit{d}t,\]
where $\theta_{1},\cdots,\theta_{m}$ are all inner angles at $v$.
\end{definition}
Notice that $k_{0}(v)=2\pi-\sum_{i=1}^{m}\theta_{i}$ is the classical discrete curvature. If $e$ is an edge having $v$ as a vertex, we denote it by $v\prec e$. It is easy to show $\sum\nolimits_{v\prec e}\psi_{0}(e)=2\pi-k_{0}(v)$ in Euclidean, hyperbolic and spherical background geometry.
\begin{theorem}\cite[Theorem 1.7]{Luo1}\label{th1.2}
Let $(S,T)$ be a closed triangulated surface. For any $h\in\mathbb{R}$, the following statements hold:
\begin{itemize}
\item[$(a)$] A Euclidean circle packing metric on $(S,T)$ is determined up to scaling by its $k_{h}$-curvature.
\item[$(b)$] A hyperbolic circle packing metric on $(S,T)$ is determined by its $k_{h}$-curvature.
\end{itemize}
\end{theorem}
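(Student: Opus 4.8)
The plan is to prove both statements by the variational principle, realizing the $k_h$-curvature as (minus) the gradient of a concave function of the radii. Write $G(\theta)=\tfrac{\pi}{2}+\int_{\pi/2}^{\theta}\tan^{h}(t/2)\,dt$, so that $G'(\theta)=\tan^{h}(\theta/2)>0$ on $(0,\pi)$ and, since a vertex of degree $m$ carries $m$ angles,
\[
k_h(v)=2\pi-\sum_{\tau\ni v}G(\theta_v^\tau),
\]
the sum running over the triangles $\tau$ incident to $v$, with $\theta_v^\tau$ the inner angle of $\tau$ at $v$. A pleasant feature of circle packings is that the three edge lengths $r_i+r_j,\ r_j+r_k,\ r_k+r_i$ automatically satisfy the triangle inequality for \emph{every} choice of positive radii, so the space of admissible metrics is an unobstructed product of positive rays; after the coordinate change below it becomes a convex open subset of $\mathbb{R}^{V}$. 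Rigidity then reduces to injectivity of the gradient of a strictly concave function on a convex domain.

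The coordinate change is the crux and must be $h$-dependent. First I would record the tangent half-angle identity for a circle-packing triangle: if $\rho$ is the inradius of the triangle with radii $r_i,r_j,r_k$, then, because the semiperimeter minus the side opposite $i$ equals $r_i$, one has $\tan(\theta_i/2)=\rho/r_i$ with $\rho^{2}=r_ir_jr_k/s$, $s=r_i+r_j+r_k$. Differentiating shows that in the naive coordinate $\log r_i$ the matrix $(\partial\theta_i/\partial u_j)$ is symmetric, but the \emph{weighted} matrix $(\tan^{h}(\theta_i/2)\,\partial\theta_i/\partial u_j)$ is not; tracking the obstruction pins down the correct variable,
\[
u_i=\tfrac{1}{h}r_i^{\,h}\ (h\ne0),\qquad u_i=\log r_i\ (h=0),
\]
in the Euclidean case (and a suitable $h$-dependent $u_i=u_i(r_i)$ forced by the same closedness requirement in the hyperbolic case). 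With this choice the computation collapses and one obtains, for $i\ne j$,
\[
\Lambda_{ij}:=\frac{\partial G(\theta_i)}{\partial u_j}=\tan^{h}(\theta_i/2)\,\frac{\partial\theta_i}{\partial u_j}=\rho^{\,h-1}\,\frac{r_k\,(r_ir_j)^{1-h}}{(r_i+r_j)\,s},
\]
which is manifestly symmetric in $i,j$ and strictly positive. This weighted identity is the main obstacle; granting it, the rest is formal.

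Closedness of $\omega_\tau=\sum_{i\in\tau}G(\theta_i^\tau)\,du_i$ lets me integrate it on each triangle to a function $F_\tau(u)$, and I set $F=\sum_\tau F_\tau$, so that $\partial F/\partial u_v=\sum_{\tau\ni v}G(\theta_v^\tau)=2\pi-k_h(v)$; thus $\nabla F$ recovers the $k_h$-curvature up to an additive constant. The Hessian of $F_\tau$ is the symmetric matrix $\Lambda_\tau=(\Lambda_{ij})$ with the positive off-diagonal entries above. In the Euclidean case the scale invariance of angles gives $\Lambda_\tau u=0$ (differentiate $\theta_i((1+\epsilon)^{h}u)$ at $\epsilon=0$), which fixes the diagonal entries $\Lambda_{ii}=-u_i^{-1}\sum_{j\ne i}\Lambda_{ij}u_j$; completing the square then yields
\[
x^{\top}\Lambda_\tau x=-\sum_{i<j}\Lambda_{ij}\Big(\sqrt{u_j/u_i}\,x_i-\sqrt{u_i/u_j}\,x_j\Big)^{2}\le 0,
\]
so $\Lambda_\tau$ is negative semidefinite with kernel exactly the radial line $\mathbb{R}u$. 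In the hyperbolic case there is no scaling symmetry and the corresponding $\Lambda_\tau$ is negative definite.

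Summing over triangles, $\Hessian F=\sum_\tau\Lambda_\tau$ is negative semidefinite on the convex domain; by connectivity of $T$ the weighted-graph structure forces its kernel at each point to be precisely the one-dimensional scaling direction (Euclidean) or trivial (hyperbolic). If two metrics $u,u'$ share the same $k_h$-curvature, then $\nabla F(u)=\nabla F(u')$; applying this to $t\mapsto F(u+t(u'-u))$ shows its derivative is constant, hence $u'-u\in\ker\Hessian F$ along the whole segment, i.e. $u'=su$ for a scalar $s$. In the Euclidean case this means $r'=s^{1/h}r$, a global rescaling, which is conclusion $(a)$; in the hyperbolic case the kernel is trivial, so $u'=u$, which is conclusion $(b)$. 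The step needing the most care is the hyperbolic half of the key lemma: the clean inradius identity must be replaced by its hyperbolic analogue, and the strict definiteness of $\Lambda_\tau$ there verified directly.
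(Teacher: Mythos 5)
A point of orientation first: the paper does not actually prove Theorem \ref{th1.2} (it is quoted from Luo \cite{Luoarxiv,Luo1}), but the machinery it builds for the bordered analogue --- the forms $\eta^{E}_{s},\eta^{H}_{s}$, the change of variable $g_{h}$, and Lemma \ref{cpcon} --- is precisely the closed-surface argument you are reconstructing, so the comparison is meaningful. Your Euclidean half matches that argument: your coordinate $u_{i}=r_{i}^{h}/h$ is the paper's $g_{h}$; the symmetry and positivity of your $\Lambda_{ij}$ are Lemma \ref{silemma} combined with the tangent law (Proposition \ref{tan1}, i.e.\ your inradius identity $\tan(\theta_{i}/2)=\rho/r_{i}$); and your completing-the-square using $\Lambda_{\tau}u=0$ is equivalent to the paper's diagonal-dominance computation $a_{ii}+a_{ji}+a_{ki}=r_{i}\,\partial(\theta_{1}+\theta_{2}+\theta_{3})/\partial r_{i}=0$. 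Two small repairs are needed there: for $h<0$ the $u_{i}$ are all negative (the ratios $u_{j}/u_{i}$ are still positive, but say so), and for $h=0$ your formula $\Lambda_{ii}=-u_{i}^{-1}\sum_{j\neq i}\Lambda_{ij}u_{j}$ divides by $\log r_{i}$, which can vanish; in that case the kernel direction in $u$-coordinates is $\mathbf{1}$ rather than $u$, and the square to complete is $-\sum_{i<j}\Lambda_{ij}(x_{i}-x_{j})^{2}$. The endgame (gradient equality forces the segment into the kernel, connectivity propagates the per-triangle scaling constant) is sound.

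The genuine gap is the hyperbolic case: ``the strict definiteness of $\Lambda_{\tau}$ there verified directly'' is the entire analytic content of part $(b)$, and your write-up supplies nothing for it. The Euclidean mechanism is unavailable because the angle sum is no longer constant, so $\Lambda_{\tau}u\neq 0$, and positivity of the off-diagonal entries alone says nothing about definiteness. The paper closes exactly this step in Lemma \ref{cpcon}$(b)$ by strict diagonal dominance of the symmetric matrix $b_{ij}=-\sinh r_{j}\,\partial\theta_{i}/\partial r_{j}$ (symmetry from Lemma \ref{silemma} with $s_{j}=\sinh r_{j}$, which is also what dictates the hyperbolic coordinate $g_{h}$): one has $b_{ii}>0$, $b_{ij}<0$, and
\[
b_{ii}+b_{ji}+b_{ki}=-\sinh r_{i}\,\frac{\partial(\theta_{i}+\theta_{j}+\theta_{k})}{\partial r_{i}}=\sinh r_{i}\,\frac{\partial}{\partial r_{i}}\mathrm{Area}(\triangle)>0,
\]
by Gauss--Bonnet together with the monotonicity of hyperbolic area in each radius. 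Without this (or an equivalent) estimate, conclusion $(b)$ does not follow from your outline; with it, the rest of your argument goes through verbatim, since a trivial kernel on each triangle forces $u'=u$.
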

\begin{remark}
Theorem \ref{th1.2}$(a)$ for $h=0$ was proved by Andreev \cite{An} and Thurston \cite{Thu}. Theorem \ref{th1.2}$(b)$ for $h=0$ was proved by Thurston \cite{Thu}.
\end{remark}
\subsection{Main results}
The polyhedral surfaces in above works are required to be closed. One may ask whether analogous results still hold for bordered polyhedral surfaces. Actually, there are two questions.
The first one is parallelling to this for closed case: Whether a polyhedral metric is determined by its discrete curvature. This was answered by the work of Ba-Zhou \cite{Ba}, which shows a similar result to Theorem \ref{th1.1}.
The second question asks whether a polyhedral metric is determined by its discrete curvature on interior edges together with its boundary value. We mention that this problem plays a significant role in exploring rigidity of cyclic polygons. See Corollary \ref{co} for details.

For an edge $e$ of a bordered polyhedral surface $(S,T)$, we call $e$ a boundary edge if $e$ belongs to the boundary of $S$. Otherwise, we call $e$ an interior edge. Similarly, we can define boundary vertices and interior vertices. For a polyhedral metric $d:E\to\mathbb{R}_{+}$ on $(S,T)$, the boundary value of $d$ is the restriction of $d$ to $E_{\partial}$, where $E_{\partial}$ is the set of boundary edges. In this paper we prove the following result.

\begin{theorem}\label{mt1}
Let $(S,T)$ be a bordered triangulated surface. For any $h\in\mathbb{R}$, the following statements hold:
\begin{itemize}
\item[$(a)$] A Euclidean polyhedral metric on $(S,T)$ is determined up to isometry by its boundary value and $\phi_{h}$-curvature (or $\psi_{h}$-curvature) on interior edges.
\item[$(b)$] A hyperbolic polyhedral metric on $(S,T)$ is determined up to isometry by its boundary value and $\psi_{h}$-curvature on interior edges.
\item[$(c)$] A spherical polyhedral metric on $(S,T)$ is determined up to isometry by its boundary value and $\phi_{h}$-curvature on interior edges.
\end{itemize}
\end{theorem}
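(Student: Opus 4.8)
The plan is to realize the interior curvatures as the gradient of a convex energy on the space of polyhedral metrics and then to deduce rigidity by restricting that energy to the locus on which the boundary lengths are frozen. The local building block is a single triangle. Following Luo's cosine-law analysis \cite{Luo3,Luo1}, for each triangle $\sigma$ one introduces an edge coordinate $u^{\sigma}=(u^{\sigma}_1,u^{\sigma}_2,u^{\sigma}_3)$ (a fixed smooth function of the three edge lengths of $\sigma$) together with a smooth function $F_\sigma(u^\sigma)$ whose partial derivative in the coordinate of an edge $e\subset\sigma$ equals the contribution of $\sigma$ to the relevant per-edge curvature, namely $\int_{a}^{\pi/2}\sin^{h}t\,dt$ for $\phi_h$ and $\int_{0}^{(b+c-a)/2}\cos^{h}t\,dt$ for $\psi_h$. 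The key analytic content is that the Hessian of $F_\sigma$ is symmetric---this is the closedness that lets $F_\sigma$ exist---and semidefinite, being definite in the hyperbolic and spherical cases and degenerate only along the Euclidean scaling direction. This is exactly the computation carried out in \cite{Luo3,Luo1} for closed surfaces and transported to the bordered setting by Ba--Zhou \cite{Ba}, and I would quote it rather than redo it.

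Assembling the triangles, I would sum the $F_\sigma$ to a global energy $W(u)$ in the edge coordinate $u=(u_e)_{e\in E}$, so that the partial derivative of $W$ in the coordinate $u_e$ of an interior edge $e$ equals $\phi_h(e)$ (resp. $\psi_h(e)$), the two triangles adjacent to $e$ supplying the two integrals of Definition \ref{def1}. The structural fact I need is that $W$ is convex on a convex domain; when the admissible region in the $u$-coordinate fails to be convex, this is arranged by the convex-extension technique of Bobenko--Pinkall--Springborn type, which produces a $C^1$ convex extension of $W$ to a convex set with the same gradient on the admissible region. In the hyperbolic and spherical cases $W$ is strictly convex; in the Euclidean case it is convex with one-dimensional kernel along the global scaling direction, which is the ``up to scaling'' indeterminacy of Theorem \ref{th1.1}$(a)$.

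The bordered statement then follows by restriction. Prescribing the boundary value fixes the coordinates $u_e$ for $e\in E_\partial$ and cuts out an affine slice $L$ of the convex domain, itself convex. The restriction $W|_{L}$ is a function of the interior coordinates alone whose gradient is precisely the vector of prescribed interior curvatures. I claim $W|_{L}$ is strictly convex. In the hyperbolic and spherical cases this is immediate. In the Euclidean case the Hessian of $W$ is positive semidefinite with kernel the scaling direction; since scaling alters every boundary length, the scaling direction is never tangent to $L$, so the kernel meets the tangent space of $L$ only at the origin and $W|_{L}$ is strictly convex (here $E_\partial\neq\emptyset$, which holds because $S$ is bordered). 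A strictly convex function on a convex set has injective gradient, so the interior curvatures determine the interior lengths; combined with the prescribed boundary lengths, the full metric is determined up to isometry, proving $(a)$, $(b)$ and $(c)$ at once.

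The step I expect to be the genuine obstacle is not the slicing, which is formal, but the convexity that underlies it: verifying the correct sign of the single-triangle Hessian for \emph{all} real $h$, and, more delicately, arranging the convex extension so as to secure \emph{global} rather than merely local injectivity of the gradient. The spherical case is the most sensitive, since the natural energy need not be globally convex over the entire space of spherical polyhedral metrics; to settle it I would expect either to restrict to the range where Luo's spherical estimates give convexity, or to supplement the variational argument with a connectedness-and-continuity argument promoting the local rigidity (nondegeneracy of the differential of the interior-curvature map) to the global rigidity asserted in the theorem.
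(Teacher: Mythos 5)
Your proposal follows essentially the same route as the paper: per-triangle energies whose gradients are the two curvature integrals of Definition~\ref{def1}, assembled into a global functional, extended convexly across the degenerate metrics, with the boundary lengths frozen to kill the Euclidean scaling kernel. The only real difference is packaging: you build the energy in all edge coordinates and then slice along the affine locus $L$ where the boundary coordinates are fixed, whereas the paper fixes the boundary lengths already at the level of a single triangle, sorting triangles by the number $s$ of interior edges and proving strict convexity of the restricted functionals $F^{*}_{s}$ for $s=1,2$ directly from the fact that every proper principal submatrix of the semidefinite matrices $\mathrm{M}$, $\mathrm{P}$ is positive definite (Lemmas~\ref{hypermatrix}, \ref{A}, \ref{locconv}). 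These are two descriptions of the same transversality statement.

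One claim in your argument is too strong as written: after the constant extension the assembled energy is only \emph{linear} on the components of the complement of the admissible region, so $W|_{L}$ is convex but not strictly convex on all of $L$, and ``strictly convex implies injective gradient'' does not apply globally. You correctly flag this as the remaining obstacle; the paper closes it not by upgrading to global strict convexity but by the segment argument of Lemma~\ref{near}: two critical points of the convex function $\widetilde{W}=W-a\cdot u$ force $f(t)=W(tu_{d_1}+(1-t)u_{d_2})$ to be affine, while strict convexity of $f$ near $t=0,1$ (where the segment sits inside the genuine moduli space, and where connectedness of $(S,T)$ rules out a simultaneous per-triangle scaling compatible with fixed boundary lengths) gives the contradiction. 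Your concern about the spherical case is also resolved in the paper exactly as in Luo's closed-surface argument: $\mathrm{M}$ is positive definite in spherical geometry, and the extension of the angles by constants across the additional locus $l_1+l_2+l_3=2\pi$ (Lemma~\ref{ex.cont.3}) yields a convex extension on $(0,\pi)^{s}$, so no supplementary connectedness-and-continuity argument is needed.
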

A Euclidean (or spherical) polygon is said to be cyclic if it can be inscribed into a Euclidean (or spherical) circle.
A hyperbolic polygon is cyclic if it can be inscribed into a hyperbolic circle, or a hypercycle, or a horocycle.  Let $P$ be a cyclic polygon. Then $P$ can be realized as a bordered polyhedral surface by adding edges between appropriate pairs of vertices of $P$. We denote it by $(P,T)$. See Figure \ref{fig2} for an example. Notice that the boundary value of $(P,T)$ is the side lengths of $P$. Lemma \ref{cyc} indicates that $\psi_{0}$-curvature on each interior edge of $(P,T)$ equals to zero. As a corollary of Theorem \ref{mt1}, we immediately obtain the rigidity of Euclidean and hyperbolic cyclic polygon. Refer e.g. \cite{Pe,Pi,Sc,Ko} for other proofs. The rigidity of spherical cyclic polygon is the straightforward corollary of the rigidity of Euclidean cyclic polygon. See \cite{Ko} for details.
\begin{figure}[htbp]
\centering

\includegraphics[scale=0.27]{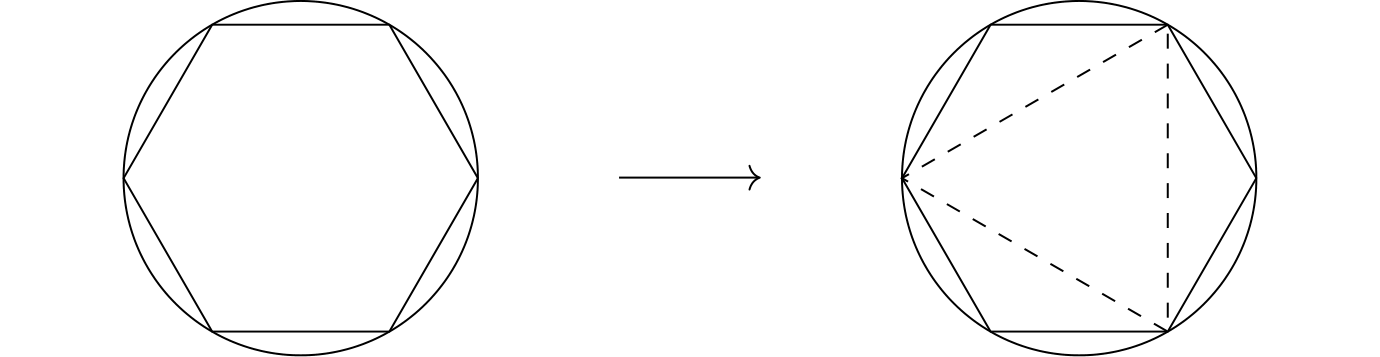}
\caption{\small Euclidean triangulated cyclic hexagon}
\label{fig2}

\end{figure}

\begin{corollary}\label{co}
A Euclidean (or hyperbolic) cyclic polygon is determined by its lengths of sides.
\end{corollary}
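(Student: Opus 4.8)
The plan is to derive the corollary as a direct application of Theorem~\ref{mt1}, by turning a cyclic polygon into a bordered polyhedral surface whose boundary data are the side lengths and whose interior $\psi_0$-curvatures all vanish. First I would set up the construction indicated in Figure~\ref{fig2}. Let $P$ be a Euclidean (resp.\ hyperbolic) polygon with vertices $v_1,\dots,v_n$ appearing in this cyclic order on a circle. Since the $v_i$ lie in cyclic order on a circle, $P$ is convex, so I may fix a triangulation $T$ of $P$ by pairwise non-crossing geodesic diagonals (for instance the fan $v_1 v_i$ for $3\le i\le n-1$). This yields a bordered triangulated surface $(S,T)=(P,T)$ carrying the polyhedral metric $d$ induced from $\mathbb{E}^2$ (resp.\ $\mathbb{H}^2$), whose boundary edges are exactly the sides of $P$; hence the boundary value of $d$ is the sequence of side lengths of $P$.

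Next I would record why every interior edge carries vanishing $\psi_0$-curvature, as already noted before the statement. Fix an interior edge $e$ shared by two triangles, with opposite angles $a,a'$ and adjacent angles $b,c,b',c'$ labelled as in Definition~\ref{def1}, so that the four endpoints form a quadrilateral inscribed in the circumscribing circle. In the Euclidean case the angle-sum relations $a+b+c=a'+b'+c'=\pi$ give $\psi_0(e)=\tfrac12(b+c-a)+\tfrac12(b'+c'-a')=\pi-a-a'$, which vanishes precisely because the inscribed-angle theorem forces $a+a'=\pi$ for a cyclic quadrilateral. In the hyperbolic case the same conclusion $\psi_0(e)=0$ holds: concyclicity of the four endpoints is exactly the equality case of the Delaunay inequality $\psi_h(e)\ge 0$ recalled in the introduction, and one checks that the equality $\tfrac12(b+c-a)+\tfrac12(b'+c'-a')=0$ is the hyperbolic analogue of the inscribed-angle relation.

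With the construction in place, the corollary follows at once. Suppose $P$ and $P'$ are two cyclic polygons (both Euclidean, or both hyperbolic) with equal ordered side lengths. Triangulating both with the \emph{same} combinatorial triangulation $T$ produces two polyhedral metrics $d$ and $d'$ on the common bordered surface $(S,T)$, which by the above share the same boundary value and the same interior $\psi_0$-curvature (identically zero). Applying Theorem~\ref{mt1}$(a)$ in the Euclidean case and Theorem~\ref{mt1}$(b)$ in the hyperbolic case, $d$ and $d'$ coincide up to isometry; in particular all diagonals have matching lengths, so corresponding triangles are congruent by $\mathrm{SSS}$ and $P$ is isometric to $P'$.

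The heart of the argument, and the step I expect to require the most care, is the middle one: translating the geometric hypothesis ``$P$ is cyclic'' into the curvature hypothesis of Theorem~\ref{mt1}. In the Euclidean setting this is the elementary identity $a+a'=\pi$, but in the hyperbolic setting the inscribed-angle theorem fails in its naive form, so I would verify $\psi_0(e)=0$ by a direct computation in a model of $\mathbb{H}^2$ (or by invoking the equality case of the Delaunay condition), this being precisely the geometric fact that makes Theorem~\ref{mt1} applicable.
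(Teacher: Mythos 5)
Your proposal follows the paper's route exactly: triangulate the cyclic polygon, observe that the boundary value is the list of side lengths, show that every interior edge (being a diagonal of an inscribed quadrilateral) has $\psi_{0}(e)=0$, and invoke Theorem~\ref{mt1}$(a)$,$(b)$. The only place where you diverge is the one nontrivial ingredient, the vanishing of $\psi_{0}$ on a diagonal of a \emph{hyperbolic} cyclic quadrilateral: you correctly flag this as the delicate step but leave it as ``a direct computation in a model of $\mathbb{H}^{2}$'' or an appeal to the equality case of the Delaunay condition, neither of which you carry out (and the paper only records the Delaunay \emph{inequality}, not its equality case). The paper closes this with a one-line synthetic argument (Lemma~\ref{cyc}): joining the circumcenter $M$ to the four vertices produces four isosceles triangles, whence the two sums of opposite angles of the quadrilateral agree, and $\psi_{0}(e)=\tfrac12\bigl((b+b')+(c+c')-(a+a')\bigr)$ is exactly half the difference of these two sums. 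This argument is uniform in the Euclidean and hyperbolic cases and replaces both your inscribed-angle computation and the missing hyperbolic verification; you should supply it (or an equivalent computation) to make the hyperbolic half of your argument complete.
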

In \cite{Luo3,Luo1}, Luo asked whether $\phi_{h}$-curvature determines a hyperbolic polyhedral metric. In the following we give a partial answer to this question. We say a triangulated surface $(S,T)$ is stripped if every triangle of $T$ has at least one boundary edge. For example, the left (resp. right) in Figure \ref{fig3} is a stripped (resp. non-stripped) triangulated surface.

\begin{figure}[htbp]
\centering

\includegraphics[scale=0.27]{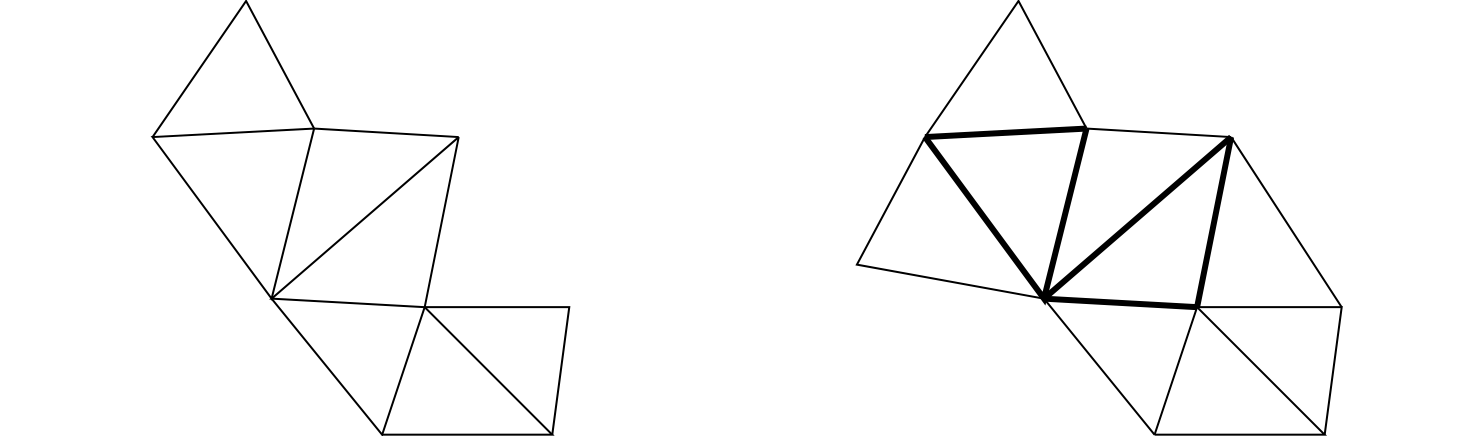}
\caption{\small Stripped and non-stripped triangulated surfaces}
\label{fig3}

\end{figure}

\begin{theorem}\label{1.7}
Let $(S,T)$ be a stripped triangulated surface. For any $h\in\mathbb{R}$, a hyperbolic polyhedral metric on $(S,T)$ is determined up to isometry by its boundary value and $\phi_{h}$-curvature of interior edges.
\end{theorem}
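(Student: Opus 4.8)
The plan is to run the variational principle that underlies Theorem \ref{th1.1}, but now assembled for a bordered \emph{stripped} surface with the boundary edge lengths frozen. Concretely, I would fix the boundary value $d|_{E_{\partial}}$, introduce on the interior edges the same ($h$-dependent) change of variables $u_e$ used in the hyperbolic case of Theorem \ref{th1.1}, and recall the per-triangle building block: for each triangle $\sigma$ with edges carrying variables $u_1,u_2,u_3$ and opposite angles $\theta_1,\theta_2,\theta_3$, there is a smooth local energy $F_\sigma(u_1,u_2,u_3)$ whose differential is the closed $1$-form $\sum_{i}\left(\int_{\theta_i}^{\pi/2}\sin^h t\,dt\right)du_i$, closedness being exactly Luo's symmetry coming from the hyperbolic cosine law. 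Summing, $W=\sum_\sigma F_\sigma$ becomes a function of the interior edge variables alone (the boundary variables entering only as frozen parameters), and by the chain rule $\partial W/\partial u_e=\phi_h(e)$ for every interior edge $e$, since the two triangles adjacent to $e$ contribute precisely the two terms in the definition of $\phi_h$.

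The heart of the argument is a single-triangle Hessian estimate. Writing $H_\sigma=\big(\partial^2 F_\sigma/\partial u_i\partial u_j\big)$, one has $H_{ij}=-\sin^h\theta_i\,(\partial\theta_i/\partial u_j)$; since the angle opposite an edge increases with that edge, the diagonal entries are negative. The sharper claim I would prove is that \emph{every proper principal submatrix} of $H_\sigma$ is negative definite, i.e.\ each diagonal entry is negative and each $2\times2$ principal minor has positive determinant. This is exactly where $\mathbb{H}^2$ differs from the spherical case of Theorem \ref{th1.1}$(c)$: the full $3\times3$ matrix $H_\sigma$ need not be definite for hyperbolic $\phi_h$ (which is why the closed hyperbolic statement is unavailable and Luo's question is genuinely open), yet its proper principal submatrices are. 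I would verify this by computing $\partial\theta_i/\partial u_j$ explicitly from the hyperbolic cosine law and checking the determinant sign. The stripped hypothesis now enters decisively: in every triangle at least one edge is a boundary edge, so its variable is frozen, and the contribution of $\sigma$ to the Hessian of $W$ in the interior variables is exactly a proper principal submatrix of $H_\sigma$ (the block obtained by deleting a boundary index, or a single diagonal entry if two edges are boundary). Hence each triangle contributes a negative-definite block, the restricted global Hessian is a sum of negative-definite matrices, and so $W$ is strictly concave in the interior variables.

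Strict concavity already yields local injectivity of the map $u\mapsto(\phi_h(e))_{e\in E_{\mathrm{int}}}=\nabla W$. To upgrade to global injectivity I would, following the standard technique of Luo and of Bobenko--Pinkall--Springborn, extend each $F_\sigma$ to a $C^1$ concave function on the whole coordinate space (using the natural constant-angle extension across the degenerate-triangle locus), so that $W$ extends to a concave function on a convex domain. The gradient of a strictly concave function on a convex domain is injective, and restricting back to the locus of genuine hyperbolic metrics gives the theorem: the boundary value together with $\phi_h$ on interior edges determines the metric up to isometry.

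I expect the decisive step to be the Hessian sign analysis for the single triangle—specifically, proving positivity of the $2\times2$ principal minors even though the full $3\times3$ determinant may change sign. Unlike the spherical case, one cannot invoke global definiteness; the entire force of the stripped hypothesis is that it confines us to proper principal submatrices, so the argument stands or falls on showing those submatrices are uniformly definite. The convex extension required for passing from local to global injectivity is a secondary technical point that should follow the now-standard pattern once the per-triangle concavity on the relevant coordinate subspaces is established.
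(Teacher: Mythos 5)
Your proposal follows essentially the same route as the paper: build the per-triangle $\phi_h$-energy in the $u$-coordinates, observe that the stripped hypothesis forces every triangle to contribute only a proper principal submatrix of the $3\times 3$ hyperbolic Hessian (which is definite, via the $2\times2$ minor $\sin^{2}\theta_k>0$, even though the full matrix need not be), and then extend by the constant-angle degeneration to get a globally convex functional whose gradient is the curvature map. Apart from an immaterial sign convention (your $W$ is the negative of the paper's $G^{H}_{s}$-based functional, hence concave rather than convex), this is the argument the paper gives.
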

For a circle packing metric $r:V\to\mathbb{R}_{+}$, the boundary value is the restriction of $r$ to $V_{\partial}$, where $V_{\partial}$ is the set of boundary vertices.
\begin{theorem}\label{mt2}
Let $(S,T)$ be a bordered triangulated surface. For any $h\in\mathbb{R}$, the following statements hold:
\begin{itemize}
\item[$(a)$] A Euclidean circle packing metric on $(S,T)$ is determined by its boundary values and $k_{h}$-curvature on interior vertices.
\item[$(b)$] A hyperbolic circle packing metric on $(S,T)$ is determined by its boundary values and $k_{h}$-curvature on interior vertices.
\end{itemize}
\end{theorem}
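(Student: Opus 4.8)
The plan is to prove both statements by a single variational argument, transplanting the convex-energy method behind Theorem \ref{th1.2} to the bordered setting. First I would fix coordinates: in the Euclidean case set $u_v=\log r(v)$, and in the hyperbolic case set $u_v=\log\tanh(r(v)/2)$, so that admissible circle packings correspond to a convex open set $\Omega\subseteq\mathbb{R}^V$. Note that for a circle packing no combinatorial constraint is ever lost, since the edge lengths $d(uv)=r(u)+r(v)$ automatically satisfy every triangle inequality: $d(uv)+d(vw)-d(uw)=2r(v)>0$; hence $\Omega$ is the full coordinate box ($\mathbb{R}^V$ in the Euclidean case, $(-\infty,0)^V$ in the hyperbolic case), which is convex. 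On each triangle $\sigma$ of $T$ the three inner angles are smooth functions of the coordinates of its vertices, and following Luo one checks that the local one-form $\sum_{v\in\sigma}\big(\int_{\pi/2}^{\theta_v^\sigma}\tan^h(t/2)\,dt\big)\,du_v$ is closed (its Jacobian $\tan^h(\theta_v/2)\,\partial\theta_v/\partial u_w$ is symmetric in $v,w$). Summing the local potentials over all triangles yields a function $E\colon\Omega\to\mathbb{R}$ with
\[
\frac{\partial E}{\partial u_v}=\sum_{\theta\text{ at }v}\int_{\pi/2}^{\theta}\tan^h(t/2)\,dt=\Big(2-\tfrac{m_v}{2}\Big)\pi-k_h(v),
\]
so that, up to a fixed additive constant, $\nabla E$ is exactly the negative of the discrete curvature map.

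The central step is convexity. The same local computation underlying Theorem \ref{th1.2} shows that the Hessian of $E$ is positive semidefinite on $\Omega$; in the hyperbolic case it is positive definite, while in the Euclidean case its kernel is precisely the scaling line $\mathbb{R}\mathbf{1}$, along which all radii are rescaled and every angle, hence every $k_h$, is left invariant. This is the only geometry-specific input, and it may be quoted directly from the closed-surface analysis, since that argument is carried out triangle by triangle and never uses closedness of $S$.

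Finally comes the restriction to the prescribed boundary data. Fixing the boundary value of $r$ amounts to restricting $E$ to the affine slice $A=\{u\in\Omega: u_v=u_v^0\text{ for all }v\in V_\partial\}$, with tangent space $T=\{u: u_v=0\text{ for }v\in V_\partial\}$. Because $S$ is bordered we have $V_\partial\neq\emptyset$, so the scaling vector $\mathbf{1}$ has a nonzero boundary component and $\mathbf{1}\notin T$; therefore the Hessian of $E|_A$ is positive definite in both geometries, i.e. $E|_A$ is strictly convex on the convex set $A$. Since $\nabla(E|_A)$ is the $T$-component of $\nabla E$, it records exactly (up to the fixed constant above) the interior-vertex curvatures $(k_h(v))_{v\in V_{\mathrm{int}}}$. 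Strict convexity on the convex set $A$ forces this gradient to be injective: if two circle packing metrics $r_1,r_2$ share the same boundary values and the same $k_h$-curvature at every interior vertex, then their coordinates $u_1,u_2$ lie in the same slice $A$ and have equal gradients of $E|_A$, whence $u_1=u_2$ and $r_1=r_2$, proving both $(a)$ and $(b)$.

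The main obstacle is the convexity of the middle step, namely verifying that the single-triangle Jacobian of the $\tan^h(\theta/2)$-weighted angles is symmetric and semidefinite with the stated kernel. This, however, is precisely the local computation already available from the proof of Theorem \ref{th1.2}, so for the bordered result the only genuinely new ingredient is the elementary observation that a nonempty boundary removes the scaling degeneracy and thereby upgrades semidefiniteness to the strict definiteness needed for exact rigidity.
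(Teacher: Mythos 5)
Your strategy is essentially the paper's: realize the interior $k_{h}$-curvatures as the gradient of a convex energy obtained by summing per-triangle potentials of the closed $1$-form $\sum_{v}\bigl(\int_{\pi/2}^{\theta_{v}}\tan^{h}(t/2)\,dt\bigr)\,du_{v}$, show the Euclidean Hessian is diagonally dominant with zero column sums (hence positive semidefinite with kernel the scaling direction) while the hyperbolic one is strictly diagonally dominant via Gauss--Bonnet, and use the nonempty boundary to remove the scaling degeneracy. The paper organizes this slightly differently: it restricts each triangle's energy to its interior-vertex variables from the outset (the functionals $C^{E}_{s}$, $C^{H}_{s}$ of Section \ref{4}, with strict convexity for $s=1,2$ coming from strict diagonal dominance of the corresponding principal submatrices, Lemma \ref{cpcon}), whereas you build the full energy on all vertices and pass to the affine slice at the end; these are the same argument. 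Note also that your claim that the kernel of the summed Euclidean Hessian is \emph{exactly} $\mathbb{R}\mathbf{1}$ needs connectivity of $(S,T)$ (a standing assumption in the paper), since a priori a kernel vector is only a separate scaling on each triangle.

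There is one concrete slip: the coordinates $u_{v}=\log r(v)$ and $u_{v}=\log\tanh(r(v)/2)$ make the local $1$-form closed only when $h=0$. In the Euclidean case $du_{v}=dr_{v}/r_{v}$, so the Jacobian entry is $\tan^{h}(\theta_{i}/2)\,r_{j}\,\partial\theta_{i}/\partial r_{j}$. Using $r_{i}\tan(\theta_{i}/2)=\mathrm{const}$ (Proposition \ref{tan1}) and $r_{i}^{-1}\partial\theta_{i}/\partial r_{j}=r_{j}^{-1}\partial\theta_{j}/\partial r_{i}$ (Lemma \ref{silemma}), this entry equals $C^{h}r_{i}^{1-h}\,\partial\theta_{j}/\partial r_{i}$, while its transpose equals $C^{h}r_{i}r_{j}^{-h}\,\partial\theta_{j}/\partial r_{i}$; symmetry forces $r_{i}^{-h}=r_{j}^{-h}$, which fails for $h\neq 0$. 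The same computation with $\sinh$ in place of $r$ defeats the hyperbolic choice. The repair is exactly the paper's $h$-dependent change of variables $g_{h}$ (i.e.\ $u_{v}=r_{v}^{h}/h$ for $h\neq 0$, resp.\ $u_{v}=\int^{r_{v}}\sinh^{h-1}$), which is what makes the forms $\eta^{E}_{s}$, $\eta^{H}_{s}$ closed; with that substitution the rest of your argument, including the convexity computation, goes through unchanged.
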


Theorem \ref{mt2} can be applied to study the rigidity of planar circle packing. A planar circle packing is a connected collection of circles on Euclidean plane whose interiors are disjoint. The contact graph of a circle packing is the graph having a vertex for each circle, and an edge for every pair of circles that are tangent. If there exists a bordered triangulated surface whose nerve is isomorphic to the contact graph of the circle packing, the circle packing is called a triangle-type circle packing. Suppose $P$ is a triangle-type circle packing. If a circle is corresponding to a boundary vertex of the bordered triangulated surface under the isomorphism, the circle is called a boundary circle in $P$. As a corollary Theorem \ref{mt2}, we obtain the following well-known result regarding the rigidity of planar circle packing. See \cite{Ste,Con} for more discussions.

\begin{figure}[htbp]
\centering

\includegraphics[scale=0.27]{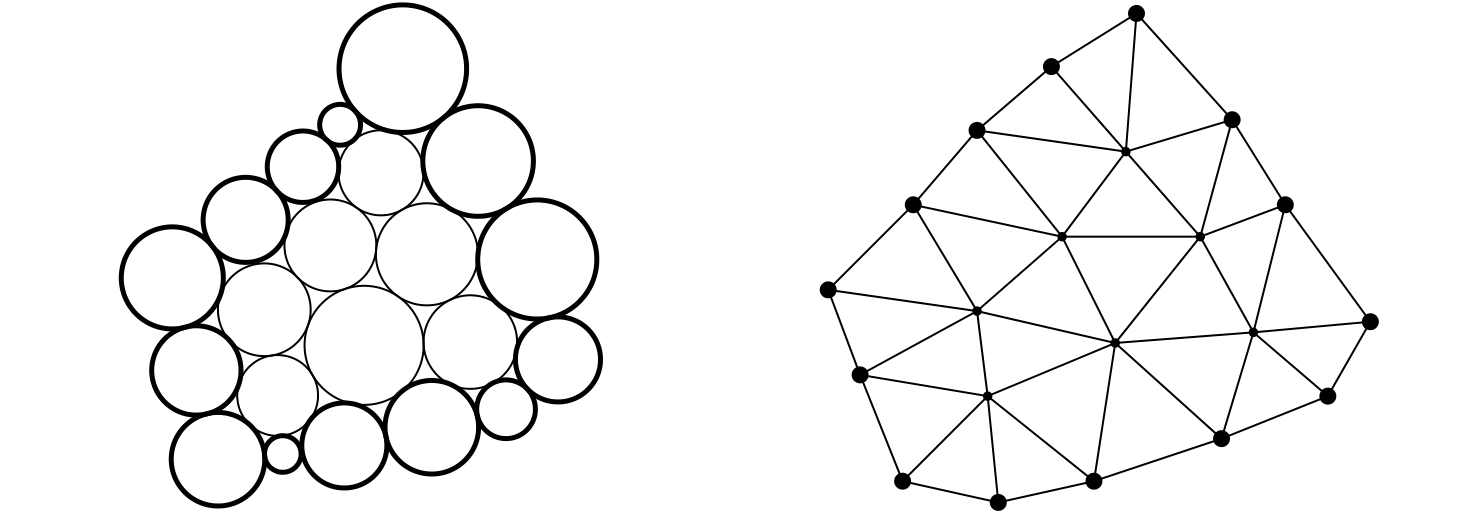}
\caption{\small A circle packing and its contact graph}
\label{fig4}

\end{figure}

\begin{corollary}\label{cocp}
A triangle-type planar circle packing on the plane is determined up to isometry by the size of boundary circles in the sense of keeping the contact graph unchanged.
\end{corollary}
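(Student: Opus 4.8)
\section*{Proof proposal for Corollary~\ref{cocp}}

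The plan is to translate the statement about planar circle packings into the language of Theorem~\ref{mt2}(a) with $h=0$, the crucial point being that planarity forces the classical discrete curvature to vanish at every interior vertex. First I would fix a triangle-type planar circle packing $P$ together with a bordered triangulated surface $(S,T)$ whose nerve is isomorphic to the contact graph of $P$. Each circle corresponds to a vertex $v\in V$; write $r(v)>0$ for its radius. Whenever two circles are tangent with disjoint interiors, the distance between their centers equals the sum of their radii, so setting $d(uv)=r(u)+r(v)$ on each edge $uv\in E$ defines a circle packing metric on $(S,T)$. Every triangle of $T$ is realized by three mutually tangent circles, whose centers span a Euclidean triangle with side lengths $r(u)+r(v)$, $r(v)+r(w)$, $r(w)+r(u)$; these always obey the triangle inequality, so the metric is genuine. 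By construction the boundary value $r|_{V_\partial}$ is precisely the collection of sizes of the boundary circles of $P$.

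Next I would record the key geometric observation: since $P$ lies flat in the Euclidean plane, the triangles surrounding any interior circle tile a full neighborhood of its center, so the inner angles $\theta_1,\dots,\theta_m$ at an interior vertex $v$ satisfy $\sum_{i=1}^m\theta_i=2\pi$, whence
\[
k_0(v)=2\pi-\sum_{i=1}^m\theta_i=0
\]
for every interior vertex $v$. Consequently, any two triangle-type circle packings $P$ and $P'$ with isomorphic contact graphs and equal boundary-circle sizes induce on the common surface $(S,T)$ two circle packing metrics $r,r'$ sharing the same boundary value and the same (identically zero) $k_0$-curvature on interior vertices. Applying Theorem~\ref{mt2}(a) with $h=0$ then yields $r=r'$, so all radii agree.

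Finally I would reconstruct the packing from its metric. Once the radii coincide, $P$ and $P'$ consist of pairwise congruent triangles of centers glued in the same combinatorial pattern. Choosing an isometry of $\mathbb{E}^2$ that carries one triangle of $P$ onto the corresponding triangle of $P'$ and propagating across shared edges produces the remaining centers—and hence the circles—uniquely, so that $P$ and $P'$ are congruent. The main obstacle I anticipate is exactly this last step: one must verify that the unfolding is single-valued, i.e.\ independent of the chain of triangles used to extend the isometry. This is where planarity enters a second time, since the vanishing of $k_0$ at each interior vertex guarantees that developing around any interior circle returns to the starting position with no angular defect, ruling out monodromy and confirming that a single isometry of the plane matches $P$ to $P'$.
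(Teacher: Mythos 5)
Your proposal is correct and follows essentially the same route as the paper: encode the packing as a circle packing metric on the bordered triangulated surface given by the contact graph, observe that planarity forces $k_{0}(v)=0$ at every interior vertex, and invoke Theorem~\ref{mt2}(a) to conclude the radii agree. Your final paragraph on developing the congruent triangles into the plane and checking there is no monodromy is a detail the paper's proof leaves implicit (it stops at $R_{1}=R_{2}$), but it is the same argument, just carried one step further.
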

The variational principle is applied to proofs of the above theorems. The use of variational principle in the study of triangulated surfaces first appeared in the work of Colin de Verdi\`{e}re \cite{Co} to give a new proof Koebe-Andreev-Thurston's circle packing theorem. Similar methods have been used by Rivin \cite{Ri1,Ri2}, Leibon \cite{Le}, Bobenko-Springborn \cite{Bo1} in other situations. We are also highly influenced by the works of Luo \cite{Luoarxiv,Luo3,Luo1}, Luo-Dai-Gu~\cite{L.D.G}, Guo-Luo \cite{Luo2}, Guo \cite{Guo}, Zhou \cite{Zhou1,Zhou2}, Xu \cite{Xu,Xu2}, Ge-Hua-Zhou \cite{Ge1,Ge2} and others.

The present paper is built up as follows: In Section \ref{2}, we establish some preliminary results. In Section \ref{3} and \ref{4}, we construct several convex energy functionals on polyhedral surfaces. In Section \ref{5}, applying variational principle, we derive Theorem \ref{mt1}, Corollary \ref{co}, Theorem \ref{1.7} and Theorem \ref{mt2}. The last two sections contain appendixes regarding to some results about trigonometry and differential forms. Throughout this paper, we always assume the polyhedral surface is connected unless otherwise stated.
\section{Preliminaries}\label{2}
\subsection{Basic results about cosine law}\label{section2.1}
We first introduce some basic results regarding to the cosine law function.
Given a geometric triangle, the cosine law expresses the inner angles in terms of three edge lengths. See Appendix \ref{AppA} for details. So we regard the inner angles of the geometric triangle as the function of three edge lengths, called cosine law function. The following three lemmas establish some basic properties of cosine law function. Please refer to \cite{chow,L.D.G,Zhou1,Ge1,Ge2} for more information.
\begin{lemma}\label{2.1}
Let $l_{s}$, $\theta_{s}$ ($s=1,2,3$) be the edge length and opposite inner angle of a geometric triangle such that $\theta_{s}$ facing $l_{s}$. Set $\{i,j,k\}=\{1,2,3\}$. Then
\[
\frac{\partial\theta_{i}}{\partial l_{i}}=\frac{m_{i}}{\sin\theta_{i}m_{j}m_{k}},\qquad\frac{\partial\theta_{i}}{\partial l_{j}}=-\cos\theta_{k}\frac{\partial\theta_{i}}{\partial l_{i}},
\]
where $m_{i}=l_{i}$ (resp. $m_{i}=\sinh l_{i}$, $m_{i}=\sin l_{i}$) in Euclidean (resp. hyperbolic, spherical) background geometry.
\end{lemma}
\begin{lemma}\cite[Lemma 2.3]{chow}\label{silemma}
Let $l_{s}$, $\theta_{s}$ ($s=1,2,3$) be the edge length and opposite inner angle of a geometric triangle such that $\theta_{s}$ facing $l_{s}$. Set $\{i,j,k\}=\{1,2,3\}$ and $r_{i}=\frac{1}{2}(l_{j}+l_{k}-l_{i})$. Then
\[
\frac{1}{s_{i}}\frac{\partial\theta_{i}}{\partial r_{j}}=\frac{1}{s_{j}}\frac{\partial\theta_{j}}{\partial r_{i}},
\]
where $s_{j}=r_{j}$ (resp. $s_{j}=\sinh r_{j}$) in Euclidean (resp. hyperbolic) background geometry.
\end{lemma}

\begin{lemma}\cite[Lemma 2.2]{chow}\label{chow-luo}
Let $l_{s}$, $\theta_{s}$ ($s=1,2,3$) be the edge length and opposite inner angle of a geometric triangle such that $\theta_{s}$ facing $l_{s}$. Set $\{i,j,k\}=\{1,2,3\}$ and $r_{i}=\frac{1}{2}(l_{j}+l_{k}-l_{i})$. Then
\[\frac{\partial(\theta_{1}+\theta_{2}+\theta_{3})}{\partial r_{i}}<0\]
in hyperbolic background geometry.
\end{lemma}
Based on Lemma \ref{2.1}, we further have the following result.
\begin{lemma}\label{2.3}
Let $l_{s}$, $\theta_{s}$ ($s=1,2,3$) be the edge length and opposite inner angle of a hyperbolic triangle such that $\theta_{s}$ facing $l_{s}$. Set $\{i,j,k\}=\{1,2,3\}$. Then

\begin{equation}
\label{a}
\frac{\partial(\theta_{i}-\theta_{j}-\theta_{k})}{\partial l_{i}}=\frac{\tanh l_{i}/2}{\sin\theta_{i}\sinh l_{j}\sinh l_{k}}(\cosh l_{1}+\cosh l_{2}+\cosh l_{3}+1),
\end{equation}
\begin{equation}
\label{b}
\frac{\partial(\theta_{i}-\theta_{j}-\theta_{k})}{\partial l_{j}}=\frac{\coth l_{j}/2}{\sin\theta_{i}\sinh l_{j}\sinh l_{k}}(-\cosh l_{i}-\cosh l_{j}+\cosh l_{k}+1).
\end{equation}

\end{lemma}
\begin{proof}
By the sine law, we know that
\[c_{ijk}=\sin\theta_{i}\sinh l_{j}\sinh l_{k}\]
is independent of indices.
From Lemma \ref{2.1}, we derive
\begin{equation}\label{p1}\begin{aligned}
\frac{\partial(\theta_{i}-\theta_{j}-\theta_{k})}{\partial l_{i}}=\frac{1}{c}\left(\sinh l_{i}+\cos\theta_{k}\sinh l_{j}+\cos\theta_{j}\sinh l_{k}\right),
\end{aligned}\end{equation}
where $c=\sin\theta_{i}\sinh l_{j}\sinh l_{k}$.
Substituting the expression of $\cos\theta_{j}$, $\cos\theta_{k}$ to (\ref{p1}), we have
\[\begin{aligned}
\frac{\partial(\theta_{i}-\theta_{j}-\theta_{k})}{\partial l_{i}}&=\frac{1}{c \sinh l_{i}}\left(-\cosh l_{k}+\cosh l_{i}\cosh l_{j}-\cosh l_{j}+\cosh l_{i}\cosh l_{k}+\cosh^{2}l_{i}-1\right)\\
&=\frac{1}{c \sinh l_{i}}\Big((\cosh l_{i}-1)(\cosh l_{j}+\cosh l_{k})+(\cosh l_{i}-1)(\cosh l_{i}+1)\Big)\\
&=\frac{\tanh\frac{l_{i}}{2}}{c}(\cosh l_{i}+\cosh l_{j}+\cosh l_{k}+1).
\end{aligned}\]
This proves (\ref{a}). Similar computation yields (\ref{b}) as well.
\end{proof}
The following two lemmas play a crucial role in the proof of the convexity of energy functionals constructed in Section \ref{3}.
\begin{lemma}\label{hypermatrix}
Let $l_{1}$, $l_{2}$, $l_{3}$ be the edge lengths of a hyperbolic triangle. Set $\{i,j,k\}=\{1,2,3\}$.
Define $\mathrm{P}=[p_{st}]$, where
\[
p_{ii}=\tanh^{2}\frac{l_{i}}{2}(\cosh l_{1}+\cosh l_{2}+\cosh l_{3}+1),\quad
p_{ij}=-\cosh l_{i}-\cosh l_{j}+\cosh l_{k}+1.
\]
Then $\mathrm{P}$ is positive definite.
\end{lemma}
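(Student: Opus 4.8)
The plan is to verify that $\mathrm{P}$ is positive definite via Sylvester's criterion, by showing that its three leading principal minors are positive. Write $c_i=\cosh l_i$ and $S=c_1+c_2+c_3+1$; since $\tanh^2(l_i/2)=\frac{\cosh l_i-1}{\cosh l_i+1}$, the diagonal entries are $p_{ii}=\frac{c_i-1}{c_i+1}S$ while $p_{ij}=1+c_k-c_i-c_j$. The quantity that controls every minor is the discriminant $\Delta:=1-c_1^2-c_2^2-c_3^2+2c_1c_2c_3$, which is positive exactly because $l_1,l_2,l_3$ are the sides of a genuine hyperbolic triangle: the hyperbolic cosine law gives $\sin^2\theta_i=\Delta/[(c_j^2-1)(c_k^2-1)]$, so $\Delta>0$. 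The first minor is then immediate, $p_{11}=\frac{c_1-1}{c_1+1}S>0$.

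For the $2\times2$ minor the decisive trick is to set $\alpha=c_1+c_2$ and $\beta=c_3+1$, which yields simultaneously $S=\alpha+\beta$ and $p_{12}=\beta-\alpha$. Then
\[
(c_1+1)(c_2+1)\big(p_{11}p_{22}-p_{12}^2\big)=(c_1-1)(c_2-1)(\alpha+\beta)^2-(c_1+1)(c_2+1)(\beta-\alpha)^2,
\]
and upon expanding the right-hand side the $\alpha^2$, $\beta^2$ and $\alpha\beta$ terms recombine and collapse to $2\alpha\,\Delta$; hence the second leading minor equals $\frac{2(c_1+c_2)}{(c_1+1)(c_2+1)}\Delta>0$. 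I expect the $3\times3$ determinant to be the main obstacle and the only genuinely laborious step. I would expand $\det\mathrm{P}$, clear the denominators $c_i+1$, and show that all terms reorganize into
\[
\det\mathrm{P}=\frac{4\,\Delta^2}{(c_1+1)(c_2+1)(c_3+1)}>0.
\]
Both identities can be sanity-checked on the equilateral case $c_1=c_2=c_3=c$, where $\Delta=(c-1)^2(2c+1)$ and $\mathrm{P}$ has constant diagonal $p$ and constant off-diagonal $q=1-c$, hence eigenvalues $p+2q$ and $p-q$ (double). With all three leading minors positive, Sylvester's criterion gives that $\mathrm{P}$ is positive definite.

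A more conceptual route to the determinant avoids most of this computation by invoking Lemma \ref{2.3}. Its two formulas say precisely that the Jacobian $J=[\,\partial(\theta_i-\theta_j-\theta_k)/\partial l_j\,]$ factors as $J=c\,\mathrm{P}\,\mathrm{diag}(\coth(l_1/2),\coth(l_2/2),\coth(l_3/2))$ with $c>0$. Right multiplication by a positive diagonal matrix scales columns within each leading block, so the leading principal minors of $\mathrm{P}$ have the same signs as those of $J$; in particular $\det\mathrm{P}$ and $\det J$ share their sign. Now $w=(\theta_i-\theta_j-\theta_k)_i$ is the linear image $w=M\theta$ of the angle vector under $M=2I-\mathbf{1}\mathbf{1}^{\mathsf T}$ with $\det M=-4$, while Lemma \ref{2.1} factors $\partial\theta/\partial l$ as a positive diagonal matrix times the symmetric matrix $N$ with $N_{ii}=1$ and $N_{ij}=-\cos\theta_k$ ($k\notin\{i,j\}$), whose determinant is $1-\cos^2\theta_1-\cos^2\theta_2-\cos^2\theta_3-2\cos\theta_1\cos\theta_2\cos\theta_3$. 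Thus $\det\mathrm{P}>0$ reduces to $\det N<0$, an inequality holding for every hyperbolic triangle: $\det N$ vanishes in the Euclidean limit $\theta_1+\theta_2+\theta_3=\pi$ and becomes strictly negative once the angle sum drops below $\pi$. Either way the crux is identical: the hyperbolic triangle inequality, encoded by $\Delta>0$, is exactly what forces the governing determinant to carry the correct sign.
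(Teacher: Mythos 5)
Your strategy is genuinely different from the paper's: the paper converts the entries of $\mathrm{P}$ into trigonometric expressions in the angles, factors $\mathrm{P}=\frac{4\cos\zeta}{\sin\theta_1\sin\theta_2\sin\theta_3}\mathrm{D}\mathrm{Q}\mathrm{D}$ with $\mathrm{D}=\mathrm{diag}(\cos\alpha_i)$, and runs Sylvester's criterion on the simpler matrix $\mathrm{Q}$, whereas you work directly in the variables $c_i=\cosh l_i$. Your first two leading minors are correct and complete: I checked that $(c_1+1)(c_2+1)(p_{11}p_{22}-p_{12}^2)=(A-B)(\alpha^2+\beta^2)+2(A+B)\alpha\beta$ with $A=(c_1-1)(c_2-1)$, $B=(c_1+1)(c_2+1)$ does collapse to $2(c_1+c_2)\Delta$, and $\Delta>0$ is correctly tied to $\sin^2\theta_i$ via the hyperbolic cosine law. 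The problem is that the one step you yourself identify as ``the main obstacle'' --- positivity of $\det\mathrm{P}$ --- is never actually carried out. The identity $\det\mathrm{P}=4\Delta^2/\prod(c_i+1)$ is announced as something you ``would show,'' and the equilateral sanity check, while consistent, is not a proof of a polynomial identity in three independent variables.

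Your second route has the same gap in a different place. The reductions are sound: $J=c\,\mathrm{P}\,\mathrm{diag}(\coth(l_j/2))$ from Lemma \ref{2.3}, $J=M\,\mathrm{diag}(\partial\theta_i/\partial l_i)\,N$ from Lemma \ref{2.1} with $\det M=-4$ and positive diagonal factors, so indeed $\det\mathrm{P}>0$ is equivalent to $\det N=1-\cos^2\theta_1-\cos^2\theta_2-\cos^2\theta_3-2\cos\theta_1\cos\theta_2\cos\theta_3<0$. But the sentence ``$\det N$ vanishes in the Euclidean limit and becomes strictly negative once the angle sum drops below $\pi$'' is an assertion, not an argument: a continuity statement at the boundary of the parameter space does not rule out $\det N$ returning to zero in the interior. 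What is needed is the product identity
\[
1-\cos^{2}\theta_{1}-\cos^{2}\theta_{2}-\cos^{2}\theta_{3}-2\cos\theta_{1}\cos\theta_{2}\cos\theta_{3}
=-4\cos\Big(\tfrac{\theta_{1}+\theta_{2}+\theta_{3}}{2}\Big)\prod_{i=1}^{3}\cos\alpha_{i},
\]
whose factors are all positive when $\theta_1+\theta_2+\theta_3<\pi$ and each $|\alpha_i|<\pi/2$. This is essentially the same trigonometric computation the paper performs when it evaluates $\det\mathrm{Q}=4\cos\big(\tfrac{\theta_1+\theta_2+\theta_3}{2}\big)\sin\theta_1\sin\theta_2\sin\theta_3$; supplying it would close your argument, but as written the proof of the $3\times3$ case is missing in both routes.
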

\begin{proof}
For the simplicity of the proof, some substitutions to $\mathrm{P}$ are needed. By some basic trigonometry identities and Proposition \ref{cosine}, we have
\[\begin{aligned}
\tanh^{2}{\frac{l_{i}}{2}}&=\frac{\cosh l_{i}-1}{\cosh l_{i}+1}\\
&=\frac{\cos\theta_{i}+\cos\theta_{j}\cos\theta_{k}-\sin\theta_{j}\sin\theta_{k}}{\cos\theta_{i}+\cos\theta_{j}\cos\theta_{k}+\sin\theta_{j}\sin\theta_{k}}\\
&=\frac{\cos(\theta_{j}+\theta_{k})+\cos\theta_{i}}{\cos(\theta_{j}-\theta_{k})+\cos\theta_{i}}\\
&=\frac{\cos\left(\frac{\theta_{i}+\theta_{j}+\theta_{k}}{2}\right)\cos\left(\frac{\theta_{j}+\theta_{k}-\theta_{i}}{2}\right)}{\cos\left(\frac{\theta_{i}+\theta_{j}-\theta_{k}}{2}\right)\cos\left(\frac{\theta_{i}+\theta_{k}-\theta_{j}}{2}\right)},
\end{aligned}\]
where $\theta_{i}$ is the inner angle facing $l_{i}$. Set $\zeta=-\frac{1}{2}(\theta_{1}+\theta_{2}+\theta_{3})$ and $\alpha_{i}=\frac{1}{2}(\theta_{i}-\theta_{j}-\theta_{k})$. It follows that
\begin{equation}\label{tanh}\tanh^{2}{\frac{l_{i}}{2}}=\frac{\cos\zeta\cos\alpha_{i}}{\cos\alpha_{j}\cos\alpha_{k}}.\end{equation}
By Proposition \ref{cosine}, we also derive
\begin{equation}\label{c123}\begin{aligned}\cosh l_{1}+\cosh l_{2}+\cosh l_{3}+1
&=\frac{\big(\sin(\theta_{2}+\theta_{3})+\sin\theta_{1}\big)\big(\cos(\theta_{2}-\theta_{3})+\cos\theta_{1}\big)}{\sin\theta_{1}\sin\theta_{2}\sin\theta_{3}}\\
&=\frac{-4\sin\zeta\cos\alpha_{1}\cos\alpha_{2}\cos\alpha_{3}}{\sin\theta_{1}\sin\theta_{2}\sin\theta_{3}}.
\end{aligned}\end{equation}
Combining (\ref{tanh}) and (\ref{c123}), we derive
\begin{equation}\label{tanh1}\begin{aligned}
\tanh^{2}{\frac{l_{i}}{2}}(\cosh l_{1}+\cosh l_{2}+\cosh l_{3}+1)
=\frac{-4\cos\zeta\sin\zeta\cos^{2}\alpha_{i}}{\sin\theta_{1}\sin\theta_{2}\sin\theta_{3}}.
\end{aligned}\end{equation}
Similar computation to (\ref{c123}) yields
\begin{equation}\label{cij}
-\cosh l_{i}-\cosh l_{j}+\cosh l_{k}+1=\frac{4\cos\zeta\cos \alpha_{i}\cos \alpha_{j}\sin \alpha_{k}}{\sin\theta_{1}\sin\theta_{2}\sin\theta_{3}}.
\end{equation}
Set $\mathrm{D}=[d_{st}]$, where $d_{ii}=\cos\alpha_{i}$, $d_{ij}=0$ and $\mathrm{Q}=[q_{st}]$, where $q_{ii}=-\sin\zeta$, $q_{ij}=\sin\alpha_{k}$.
Combining (\ref{tanh1}) and (\ref{cij}), we have
\[
\mathrm{P}=\frac{4\cos\zeta}{\sin\theta_{1}\sin\theta_{2}\sin\theta_{3}}\mathrm{D}\mathrm{Q}\mathrm{D}.
\]
Now it remains to prove the positive-definiteness of $\mathrm{Q}$. We will prove it by showing that determinant of sequential principal minor of $\mathrm{Q}$ is positive. Obviously, we have $-\sin\zeta>0$.
The determinant of $2$-principal minor of $\mathrm{Q}$ equals to
\[\begin{aligned}
&(\sin\zeta+\sin\alpha_{3})(\sin\zeta-\sin \alpha_{3})\\
=&\ 4\sin\frac{\zeta+\alpha_{3}}{2}\cos\frac{\alpha_{1}+\alpha_{2}}{2}\cos\frac{\zeta+\alpha_{3}}{2}\sin\frac{\alpha_{1}+\alpha_{2}}{2}\\
=&\sin(\zeta+\alpha_{3})\sin(\alpha_{1}+\alpha_{2})\\
=&\sin(\theta_{1}+\theta_{2})\sin\theta_{3}>0.
\end{aligned}\]
Some tedious computation yields
\[
\begin{aligned}
\det\mathrm{Q}&=\sin\zeta\cos^{2}\zeta-\frac{1}{2}\cos\zeta\sum\nolimits_{i=1}^{3}\sin2\alpha_{i}\\
&=\frac{1}{2}\cos\zeta\left(\sin2\zeta-\sum\nolimits_{i=1}^{3}\sin2\alpha_{i}\right)\\
&=\cos\zeta\big(\cos(\theta_{1}-\theta_{2})-\cos(\theta_{1}+\theta_{2})\big)\sin\theta_{3}\\
&=4\cos\left(\frac{\theta_{1}+\theta_{2}+\theta_{3}}{2}\right)\sin\theta_{1}\sin\theta_{2}\sin\theta_{3}>0.
\end{aligned}
\]
We thus finish the proof.
\end{proof}
\begin{lemma}\label{A}
Let $\theta_{1}$, $\theta_{2}$, $\theta_{3}$ be inner angles of a geometric triangle. Set $\{i,j,k\}=\{1,2,3\}$. Set $\mathrm{M}=[m_{st}]$, where $p_{ii}=1$, $p_{ij}=-\cos\theta_{k}$. Then $\mathrm{M}$ is semi-positive definite (resp. positive definite) in Euclidean (resp. spherical) geometry.
\end{lemma}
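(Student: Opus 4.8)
The plan is to realize $\mathrm{M}$ as a Gram matrix. Attach to each side of the triangle the outward unit normal of the line (Euclidean case) or of the plane through the center carrying that great-circle arc (spherical case); label by $N_i$ the normal of the side opposite the vertex $i$. If sides $j$ and $k$ meet at vertex $i$, the interior angle there is $\theta_i$, so the two outward normals $N_j,N_k$ enclose the exterior angle $\pi-\theta_i$, whence $\langle N_j,N_k\rangle=\cos(\pi-\theta_i)=-\cos\theta_i=m_{jk}$, while $\langle N_i,N_i\rangle=1=m_{ii}$. Thus $\mathrm{M}=\big(\langle N_i,N_j\rangle\big)_{i,j}$ is exactly the Gram matrix of $N_1,N_2,N_3$.

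Since every Gram matrix is positive semi-definite, this already yields the Euclidean assertion: there the normals live in $\mathbb{R}^2$, so $N_1,N_2,N_3$ are linearly dependent and $\mathrm{M}$ has rank at most $2$; as no two sides are parallel the rank is in fact $2$, so $\mathrm{M}$ is positive semi-definite with $\det\mathrm{M}=0$. In the spherical case the normals live in $\mathbb{R}^3$, and because a nondegenerate spherical triangle spans, the poles $N_1,N_2,N_3$ are linearly independent; the Gram matrix of independent vectors is positive definite, giving the spherical assertion. The only delicate points on this route are the orientation bookkeeping that produces the supplement $\pi-\theta_i$ (rather than $\theta_i$) and the nondegeneracy used for the independence of the $N_i$.

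For a self-contained alternative matching the computational style of Lemma \ref{hypermatrix}, I would instead invoke Sylvester's criterion. The $1\times1$ leading minor is $1$ and each $2\times2$ leading minor equals $1-\cos^2\theta_k=\sin^2\theta_k>0$, so everything reduces to the sign of
\[
\det\mathrm{M}=1-\cos^2\theta_1-\cos^2\theta_2-\cos^2\theta_3-2\cos\theta_1\cos\theta_2\cos\theta_3=-4\cos s\prod_{i=1}^{3}\cos(s-\theta_i),
\]
where $s=\tfrac12(\theta_1+\theta_2+\theta_3)$. In the spherical case $s\in(\pi/2,3\pi/2)$ forces $\cos s<0$, while the triangle inequalities for the polar triangle give $\theta_j+\theta_k-\theta_i<\pi$, hence $s-\theta_i\in(-\pi/2,\pi/2)$ and $\cos(s-\theta_i)>0$; thus $\det\mathrm{M}>0$ and $\mathrm{M}$ is positive definite. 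In the Euclidean case $s=\pi/2$ gives $\det\mathrm{M}=0$, and positive semi-definiteness follows from the characteristic polynomial $\lambda\big(\lambda^2-3\lambda+\sum_i\sin^2\theta_i\big)$, whose two nonzero roots have positive sum and positive product and are therefore both positive. I expect the main obstacle to be precisely this determinant step: either establishing the factorization above together with the range analysis of $s-\theta_i$, or, on the Gram-matrix route, certifying the linear independence of the normals from nondegeneracy of the triangle.
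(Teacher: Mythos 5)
Your proposal is correct; note that the paper itself states Lemma \ref{A} \emph{without} proof (it is quoted as known, in the spirit of \cite{Luoarxiv,L.D.G,Luo1}), so there is no in-text argument to compare against and your write-up genuinely fills a gap. Of your two routes, the second (Sylvester's criterion plus the factorization $\det\mathrm{M}=-4\cos s\prod_{i}\cos(s-\theta_{i})$, $s=\tfrac12(\theta_1+\theta_2+\theta_3)$) is the one that matches the paper's style: the proof of the companion hyperbolic Lemma \ref{hypermatrix} likewise reduces to a sign analysis of leading principal minors of an explicitly factored matrix, and your range checks ($s>\pi/2$ and $\theta_j+\theta_k-\theta_i<\pi$ from the polar triangle in the spherical case, $s=\pi/2$ in the Euclidean case) are the exact analogue of the sign analysis of $\det\mathrm{Q}$ there; the characteristic-polynomial argument for semi-definiteness in the degenerate Euclidean case is sound because $\mathrm{M}$ is symmetric and hence has real eigenvalues. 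Your first route, realizing $\mathrm{M}$ as the Gram matrix of the outward normals (poles), is the more conceptual proof and is essentially the argument of the cited references: it yields positive semi-definiteness for free and reduces positive definiteness to linear independence of $N_1,N_2,N_3$, which holds since coplanar poles would force the three great circles through a common antipodal pair, contradicting nondegeneracy. Both the orientation bookkeeping ($\langle N_j,N_k\rangle=-\cos\theta_i$ is precisely the polar-triangle relation) and the determinant identity check out, so either argument can stand as the proof.
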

\begin{remark}
Lemma \ref{hypermatrix} was also proved in \cite{Luoarxiv,L.D.G,Luo1}. Comparing with the proof of in \cite{Luoarxiv,L.D.G,Luo1}, our proof is purely computational.
\end{remark}
\subsection{Differential forms on the moduli spaces of triangles}
In this subsection, we introduce several closed differential forms defined on the moduli space of geometric triangles. One can refer to Appendix \ref{AppB} for some basic results about differential forms. The moduli space of a geometric triangle under polyhedral metric can be written as
\[\begin{aligned}
\Omega^{E}_{3}=\Omega^{H}_{3}&=\left\{(l_{1},l_{2},l_{3})\in\mathbb{R}^{3}_{+}\ \big|\ l_{i}+l_{j}>l_{k},\{i,j,k\}=\{1,2,3\}\right\},\\
\Omega^{S}_{3}&=\left\{(l_{1},l_{2},l_{3})\in(0,\pi)^{3}\ \big|\ l_{i}+l_{j}>l_{k},\ l_{1}+l_{2}+l_{3}<2\pi\right\},
\end{aligned}\]
in Euclidean, hyperbolic and spherical geometry, respectively. If we restrict the length of one edge, such as $l_{3}$ is a positive constant, then the moduli space of the triangle is a subset of $\mathbb{R}^2_{+}$, denote as $\Omega^{E}_{2}$, $\Omega^{H}_{2}$ and $\Omega^{S}_{2}$, respectively. Similarly, if $l_{2}, l_{3}$ are constants, we write the moduli space as $\Omega^{E}_{1}$, $\Omega^{H}_{1}$ and $\Omega^{S}_{1}$, respectively.
Specifically, if the polyhedral metric is a circle packing metric, the moduli space of triangles can be regard as the admissible space of the radius assignment, which is $\mathbb{R}^3_{+}$. The moduli space of triangles which fix the radius assignments of one, (resp. two) vertices is $\mathbb{R}^2_{+}$ (resp. $\mathbb{R}_{+}$). Set
\begin{equation}\label{forms}\begin{aligned}
             &\omega^{E}_{\phi,s}:=\sum_{i=1}^{s}\frac{\int_{\pi/2}^{\theta_{i}}\sin^{h}{t}\mathrm{d}t}{l_{i}^{h+1}}\mathrm{d}l_{i},\
             \omega^{H}_{\psi,s}:=\sum_{i=1}^{s}\frac{\int_{0}^{\alpha_{i}}\cos^{h}{t}\mathrm{d}t}{\tanh^{h+1}{l_{i}/2}}\mathrm{d}l_{i},\
             \omega^{S}_{\phi,s}:=\sum_{i=1}^{s}\frac{\int_{\pi/2}^{\theta_{i}}\sin^{h}{t}\mathrm{d}t}{\sin^{h+1}{l_{i}}}\mathrm{d}l_{i},\\
             &\omega^{H}_{\phi,s}:=\sum_{i=1}^{s}\frac{\int_{\pi/2}^{\theta_{i}}\sinh^{h}{t}\mathrm{d}t}{\sinh^{h+1}l_{i}}\mathrm{d}l_{i},\
             \eta^{E}_{s}:=\sum_{i=1}^{s}\frac{\int_{\pi/2}^{\theta_{i}}\tan^{h}{\frac{t}{2}}\mathrm{d}t}{r_{i}^{-h+1}}\mathrm{d}r_{i},\ \
             \eta^{H}_{s}:=\sum_{i=1}^{s}\frac{\int_{0}^{\theta_{i}}\tan^{h}\frac{t}{2}\mathrm{d}t}{\sinh^{-h+1}r_{i}}\mathrm{d}r_{i},
\end{aligned}\end{equation}
where $\alpha_{i}=\frac{1}{2}(\theta_{i}-\theta_{j}-\theta_{k})$ and $r_{i}=\frac{1}{2}(l_{j}+l_{k}-l_{i})$.
The first four differential forms are defined on the moduli space of triangles for polyhedral metric. The last two differential forms are defined on the moduli space of triangles for circle packing metric. Using Lemma \ref{2.1}, Lemma \ref{silemma}, Lemma \ref{2.3} and proposition \ref{tan1}, the following result can be verified easily.
\begin{lemma}\label{path}
The differential forms defined by (\ref{forms}) are closed.
\end{lemma}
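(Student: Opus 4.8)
The plan is to verify closedness in coordinates. Each of the six objects is a $1$-form $\omega=\sum_{i=1}^{s}F_i\,\mathrm{d}x_i$ on an open subset of $\mathbb{R}^s$, with $x_i=l_i$ for the four $\omega$'s and $x_i=r_i$ for the two $\eta$'s; hence $\omega$ is closed exactly when $\partial F_i/\partial x_j=\partial F_j/\partial x_i$ for every pair $i\neq j$. Restricting to the lower-dimensional moduli spaces only deletes terms without altering the surviving coefficients, so it suffices to check these pairwise identities on the full three-dimensional moduli space. The structural point is that every coefficient has the form $F_i=\Theta(\xi_i)/N(x_i)$, where $\xi_i$ is an angular quantity ($\xi_i=\theta_i$ for the $\phi$- and $\eta$-forms, $\xi_i=\alpha_i$ for $\omega^H_{\psi,s}$), $\Theta$ is an antiderivative of the relevant integrand, and the denominator $N(x_i)$ depends on $x_i$ alone. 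Therefore, for $j\neq i$,
\[\frac{\partial F_i}{\partial x_j}=\frac{\Theta'(\xi_i)}{N(x_i)}\,\frac{\partial\xi_i}{\partial x_j},\]
and everything is driven by the cross derivatives $\partial\xi_i/\partial x_j$ furnished by the preliminary lemmas.

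For the three length-based $\phi$-type forms $\omega^E_{\phi,s}$, $\omega^S_{\phi,s}$, $\omega^H_{\phi,s}$ I would substitute Lemma \ref{2.1}, namely $\partial\theta_i/\partial l_j=-\cos\theta_k\,m_i/(\sin\theta_i\,m_j m_k)$, and use that here $N(l_i)=m_i^{h+1}$. After plugging in and removing the factor $\cos\theta_k/m_k$, which is symmetric in $i$ and $j$, the identity $\partial_{l_j}F_i=\partial_{l_i}F_j$ reduces to the equality of $(\sin\theta_i/m_i)^{h-1}$ over $i$, which is precisely the law of sines in the Euclidean, spherical, and hyperbolic settings. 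Thus these three forms are closed.

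The remaining three cases replace the law of sines by a different triangle invariant. For $\omega^H_{\psi,s}$, since $\alpha_i=\tfrac12(\theta_i-\theta_j-\theta_k)$, Lemma \ref{2.3} gives $\partial\alpha_i/\partial l_j=\tfrac{c}{2}\coth(l_j/2)\,(-\cosh l_i-\cosh l_j+\cosh l_k+1)$, whose scalar factor is symmetric in $i,j$; after substituting and removing it, closedness reduces to the equality of $(\cos\alpha_i/\tanh(l_i/2))^{h}$ over $i$, and the ratio $\cos\alpha_i/\tanh(l_i/2)$ is independent of $i$ by the identity already recorded in the proof of Lemma \ref{hypermatrix}. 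For the two circle-packing forms $\eta^E_s$, $\eta^H_s$ I would use Lemma \ref{silemma}, i.e. $\partial\theta_i/\partial r_j=(s_i/s_j)\,\partial\theta_j/\partial r_i$ with $s_i=r_i$ (Euclidean) or $s_i=\sinh r_i$ (hyperbolic); substituting and simplifying, closedness reduces to the equality of $(s_i\tan(\theta_i/2))^{h}$ over $i$. The quantity $s_i\tan(\theta_i/2)$ is the inradius of the triangle (respectively its hyperbolic analogue), hence independent of $i$, which is exactly the content of Proposition \ref{tan1}.

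The differentiation and algebra are routine; the genuine content of each case is the final step, that a fixed power of a triangle invariant $\textemdash$ the law-of-sines ratio $\sin\theta_i/m_i$, the half-angle ratio $\cos\alpha_i/\tanh(l_i/2)$, or the inradius-type quantity $s_i\tan(\theta_i/2)$ $\textemdash$ takes the same value at all three edges. I expect the hyperbolic $\psi$- and circle-packing forms to be the main obstacle, because there the required invariance is not the familiar law of sines but the less standard identities assembled in Proposition \ref{tan1} (together, for $\omega^H_{\psi,s}$, with the trigonometric computation underlying Lemma \ref{hypermatrix}). Since these invariants hold identically on the moduli space, the two mixed partials agree everywhere, including on the zero sets of the factors cancelled above, where both sides vanish simultaneously.
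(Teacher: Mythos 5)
Your verification is correct and is essentially the paper's own argument: the paper simply asserts that the closedness ``can be verified easily'' from Lemma \ref{2.1}, Lemma \ref{silemma}, Lemma \ref{2.3} and Proposition \ref{tan1}, and your reduction of each symmetry $\partial F_i/\partial x_j=\partial F_j/\partial x_i$ to the constancy over $i$ of the law-of-sines ratio $\sin\theta_i/m_i$, of $\cos\alpha_i\coth(l_i/2)$, and of $s_i\tan(\theta_i/2)$ is exactly that verification, including the correct observation that the cases $s=1,2$ follow by restriction. (The only caveat: for $\omega^{H}_{\phi,s}$ your law-of-sines step requires the integrand to be $\sin^{h}t$ rather than the printed $\sinh^{h}t$, which is evidently a typo in the paper since the gradient must reproduce the $\phi_h$-curvature.)
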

\begin{remark}
When $s=3$, the differential forms $\omega^{E}_{\phi,s}$, $\omega^{H}_{\psi,s}$, $\omega^{S}_{\phi,s}$, $\eta^{E}_{s}$ and $\eta^{H}_{s}$ first appeared in the work of Luo \cite{Luoarxiv}, which also proved the closeness of these differential forms. For the need to prove our main results, we consider above differential forms when $s=1,2$ and define $\omega^{H}_{\phi,s}$.
\end{remark}

\section{Energy functionals on the moduli spaces of polyhedral metrics}\label{3}
\subsection{Energy functionals on the moduli spaces of triangles}\label{Section3.1}
We start from the energy functional defined on the moduli spaces of triangles. According to the number of interior edges of the triangle of polyhedral surface, we divide the triangles into three types.
We begin by introducing a continuous map defined on the moduli space of triangles. Let $\tau$ be a Euclidean (or hyperbolic or spherical) triangle in a bordered triangulated surface and let $l=(l_{1},l_{2},l_{3})\in\mathbb{R}^{3}_{+}$ represent the edge length of $\tau$. Then the continuous map $\xi$ is defined as
\begin{equation}\label{xi}
\xi(l)=\big(\xi_{h}(l_{1}),\xi_{h}(l_{2}),\xi_{h}(l_{3})\big),
\end{equation}
where
\[\xi_{h}(t)=\left\{
             \begin{aligned}
             &\frac{-t^{-h}}{h},\ h\neq0,&\tau\ \text{is a Euclidean triangle}, \\
             &\ln{t},\quad h=0,&\tau\ \text{is a Euclidean triangle},\\
             &\int^{t}_{1}\tanh^{-h-1}\frac{x}{2}\mathrm{d}x,&\tau\ \text{is a hyperbolic triangle},\\
             &\int^{t}_{1}\sin^{-h-1}{x}\mathrm{d}x.&\tau\ \text{is a spherical triangle}.
             \end{aligned}
\right.\]
When $\tau$ is a hyperbolic triangle, another continuous map $\gamma$ is defined as
\begin{equation}\label{gamma}\gamma(l)=\big(\gamma_{h}(l_{1}),\gamma_{h}(l_{2}),\gamma_{h}(l_{3})\big),\end{equation}
where
\[\gamma_{h}(t)=\int^{t}_{1}\sinh^{-h-1}{x}\mathrm{d}x.\]
If we consider the moduli space of triangles with one or two interior edges, similar map can be defined on $\Omega^{E}_{s}$, $\Omega^{H}_{s}$, $\Omega^{S}_{s}$ when $s=1,2$. We still denote them by $\xi$ and $\gamma$. Then the following result is straightforward because of the monotonicity of $\xi_{h}(t)$, $\gamma_{h}(t)$.
\begin{lemma}\label{homeomorphism}
The continuous maps $\xi$, $\gamma$ defined in (\ref{xi}),(\ref{gamma}) are homeomorphic to its image.
\end{lemma}
Now we introduce several energy functionals on the moduli space of triangles. For $s=1,2,3$, we define line integral function
\begin{equation}\label{function}
             \begin{aligned}
             F^{E}_{s}(u):=\int^{u}_{-h}\omega^{E}_{\phi,s}, &\ u\in \xi(\Omega^{E}_{s}),\quad
             F^{H}_{s}(u):=\int^{u}_{0}\omega^{H}_{\psi,s}, \  u\in\xi(\Omega^{H}_{s}),\\
             F^{S}_{s}(u):=\int^{u}_{0}\omega^{S}_{\phi,s}, &\  u\in\xi(\Omega^{S}_{s}),\quad
             G^{H}_{s}(u):=\int^{u}_{0}\omega^{H}_{\phi,s}, \  u\in\gamma(\Omega^{H}_{s}).
             \end{aligned}
\end{equation}
By Poincar\'{e}'s Lemma and Lemma \ref{path}, above functions are well-defined.
\begin{remark}
Function $F^{E}_{s}(u)$, $F^{H}_{s}(u)$ and $F^{S}_{s}(u)$ were first introduced in \cite{Luoarxiv} when $s=3$.
\end{remark}
\begin{lemma} \label{locconv}
Let $F^{E}_{s}$, $F^{H}_{s}$, $F^{S}_{s}$, $G^{H}_{s}$ be defined in ($\ref{function}$). The following statements hold:
\begin{itemize}
\item[$(a)$]The integral $F^{E}_{s}$ is locally convex when $s=3$ and locally strictly convex when $s=1,2$.
\item[$(b)$]The integral $F^{S}_{s}$ is locally strictly convex for $s=1,2,3$.
\item[$(c)$]The integral $G^{H}_{s}$ is locally strictly convex when $s=1,2$.
\item[$(d)$]The integral $F^{H}_{s}$ is locally strictly convex for $s=1,2,3$.
\end{itemize}
\end{lemma}
\begin{proof}
Some computations indicate that the function defined in ($\ref{function}$) is smooth. It suffices to verify the convexity by computing the Hessian matrix. Let us first consider the Hessian matrix of $F^{E}_{3}(u)$. Observe that
\begin{equation}\label{ad1}            \omega^{E}_{\phi,3}=\sum_{i=1}^{3}\frac{\int_{\pi/2}^{\theta_{i}}\sin^{h}{t}\mathrm{d}t}{l_{i}^{h+1}}\mathrm{d}l_{i}=\sum_{i=1}^{3}\int^{\theta_{i}}_{\pi/2}\sin^{h}t\mathrm{d}t\mathrm{d}u_{i}.
\end{equation}
It follows that
\begin{equation}\label{ad2}
\frac{\partial F^{E}_{3}(u)}{\partial u_{i}}=\int^{\theta_{i}}_{\pi/2}\sin^{h}t\mathrm{d}t.
\end{equation}
Then we obtain
\begin{equation}\label{ad3}\begin{aligned}
\frac{\partial^{2}F^{E}_{3}(u)}{\partial u_{i}\partial u_{j}}=\sin^{h}\theta_{i}\frac{\partial\theta_{i}}{\partial u_{j}}
=\sin^{h}\theta_{i}\frac{\partial\theta_{i}/\partial l_{j}}{\xi'_{h}(l_{j})}.
\end{aligned}
\end{equation}
By Lemma \ref{2.1}, we deduce
\begin{equation}\label{2-derivative}
\frac{\partial^{2}F^{E}_{3}(u)}{\partial u_{i}^{2}}=cl^{2h+2}_{i},\quad\quad
\frac{\partial^{2}F^{E}_{3}(u)}{\partial u_{i}\partial u_{j}}=cl^{h+1}_{i}l^{h+1}_{j}\cos\theta_{k},
\end{equation}
where
\[c=\frac{\sin^{h-1}\theta_{i}}{l^{h}_{i}l_{j}l_{k}}\]
is independent of the index. Set $\mathrm{D_{1}}=[a_{st}]$, where $a_{ij}=0$, $a_{ii}=l_{i}^{h+1}$. Recall that $\mathrm{M}=[m_{st}]$, where $p_{ii}=1$, $p_{ij}=-\cos\theta_{k}$. Then the Hessian matrix of $F^{E}_{3}(u)$ can be represented as $c\mathrm{D_{1}}\mathrm{M}\mathrm{D_{1}}$. We use $\mathrm{A'}$ to represent the submatrix of $\mathrm{A}$, obtained by removing the last row and the last column of matrix $\mathrm{A}$. When $s=2$, similar reasoning yields that the Hessian matrix of $F^{E}_{2}(u)$ can be represented as $c\mathrm{D_{1}'}\mathrm{M'}\mathrm{D'_{1}}$. Now the positive-definiteness of Hessian matrix of $F^{E}_{3}(u)$, $F^{E}_{2}(u)$ is equivalent to the positive-definiteness of $\mathrm{M}$ and $\mathrm{M'}$, which is proved by Lemma \ref{A}. When $s=1$, the strictly convexity of $F^{E}_{1}(u)$ is the direct result of (\ref{2-derivative}). Thus $(a)$ is proved.
By Lemma \ref{2.1} and sine law, some computations show that the second partial derivative of $F^{S}_{3}(u)$ and $G^{H}_{3}(u)$ have the same form as (\ref{2-derivative}). Lemma \ref{A} indicates $\mathrm{M}$ is strictly positive-definite in spherical background geometry. Thus $(b)$ is proved. Similar analysis yields $(c)$ easily.

Next we compute the Hessian matrix of $F^{H}_{3}(u)$. Remember that $\alpha_{i}=\frac{1}{2}(\theta_{i}-\theta_{j}-\theta_{k})$. Similar computation to (\ref{ad1})-(\ref{ad3}) yields that
\[\begin{aligned}
\frac{\partial^{2}F^{H}_{3}(u)}{\partial u_{i}\partial u_{j}}=\cos^{h}\alpha_{i}\frac{\partial \alpha_{i}}{\partial u_{j}}
=\frac{\cos^{h}\alpha_{i}}{\gamma'_{h}(l_{j})}\frac{\partial\alpha_{i}}{\partial l_{j}}.
\end{aligned}\]
By Lemma \ref{2.3} we obtain
\[\begin{aligned}
\frac{\partial^{2}F^{H}_{3}(u)}{\partial u_{i}^{2}}&=m_{2}\tanh^{2h+2}\frac{l_{i}}{2}(\cosh l_{1}+\cosh l_{2}+\cosh l_{3}+1),\\
\frac{\partial^{2}F^{\mathrm{H}}_{3}(u)}{\partial u_{i}\partial u_{j}}&=m_{2}\tanh^{h}\frac{l_{i}}{2}\tanh^{h}\frac{l_{j}}{2}(-\cosh l_{i}-\cosh l_{j}+\cosh l_{k}+1),
\end{aligned}\]
where \[m_{2}=\frac{\cos^{h}\alpha_{i}}{\tanh^{h}\frac{l_{i}}{2}\sin\theta_{i}\sinh l_{j}\sinh l_{k}}.\] Lemma \ref{tan1} and sine law yield that $m_{2}$ is independent of the indices. Set $\mathrm{D_{2}}=[b_{st}]$, where $b_{ij}=0$, $b_{ii}=\tan^{h}\frac{l_{i}}{2}$. Then the Hessian matrix of $F^{H}_{3}(u)$ can be represented as $c\mathrm{D_{2}}\mathrm{P}\mathrm{D_{2}}$, where $\mathrm{P}$ is the matrix defined in Lemma \ref{hypermatrix} and $c$ is a positive number. Thus $(d)$ is the result of Lemma \ref{hypermatrix}.

\end{proof}

\subsection{Extension of locally convex functionals}
We establish some simple facts on extending locally convex energy functionals to convex functionals in this subsection. Suppose $A$ is a subspace of a topological space $X$ and $f:A\rightarrow Y$ is continuous. If there exists a continuous function $F:X\rightarrow Y$ such that $F|_{A}=f$ and $F$ is a constant function on each connected component of $X-A$, then we say $f$ can be extended continuously by constant functions to $X$. In this subsection, we define $J=\mathbb{R}_{+}$ in Euclidean and hyperbolic geometry and $J=(0,\pi)$ in spherical geometry.
\begin{lemma}\cite[Lemma 2.2]{Luo3}\label{ex.cont.3}
Let $l_{i}$, $\theta_{i}$ ($i=1,2,3$) be the edge length and opposite inner angle of a geometric triangle such that $\theta_{i}$ facing $l_{i}$. The cosine law function $\theta_{i}(l_{1},l_{2},l_{3})$ can be extended continuously by constant functions to be a function $\widetilde{\theta}_{i}(l)$ on $J^{3}$.
\end{lemma}
Inspired by Lemma \ref{ex.cont.3}, we have the following observation. The proof is analogous to the proof of Lemma \ref{ex.cont.3}, we omit it here.
\begin{lemma}\label{ex.cont.2}
Let $l_{i}$, $\theta_{i}$ ($i=1,2,3$) be the edge length and opposite inner angle of a geometric triangle such that $\theta_{i}$ facing $l_{i}$. The following statements hold:
\begin{itemize}
\item[$(a)$] Suppose $l_{3}$ is a positive constant. The cosine law function $\theta_{i}(l_{1},l_{2})$ can be extended continuously by constant functions to be $\widetilde{\theta}_{i}(l_{1},l_{2})$ on $J^{2}$.
\item[$(b)$] Suppose $l_{2}$, $l_{3}$ are positive constants. The cosine law function $\theta_{i}(l_{1})$ can be extended continuously by constant functions to be $\widetilde{\theta}_{i}(l_{1})$ on $J^{1}$.
\end{itemize}
\end{lemma}
Replacing $\theta_{i}$ in $\omega^{E}_{\phi,s}$, $\omega^{H}_{\phi,s}$, $\omega^{S}_{\phi,s}$, $\omega^{H}_{\psi,s}$ with $\widetilde{\theta}_{i}$, we obtain four new differential forms $\widetilde{\omega}^{E}_{\phi,s}$, $\widetilde{\omega}^{H}_{\phi,s}$, $\widetilde{\omega}^{S}_{\phi,s}$ and $\widetilde{\omega}^{H}_{\psi,s}$.
\begin{lemma}\label{closeness}
Differential forms $\widetilde{\omega}^{E}_{\phi,s}$, $\widetilde{\omega}^{H}_{\phi,s}$, $\widetilde{\omega}^{S}_{\phi,s}$ and $\widetilde{\omega}^{H}_{\psi,s}$ are closed on $J^{s}$.
\end{lemma}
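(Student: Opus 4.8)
The plan is to use the decomposition of $J^{s}$ furnished by Lemma \ref{ex.cont.3} and Lemma \ref{ex.cont.2}, checking closedness separately on the open pieces and then gluing across the degeneracy walls by continuity. Write $\Omega_{s}$ for whichever of $\Omega^{E}_{s}$, $\Omega^{H}_{s}$, $\Omega^{S}_{s}$ is relevant, and let $Z\subseteq J^{s}$ be the union of the walls $l_{jk}$ (together with $l_{123}$ in the spherical case) on which a triangle degenerates. Then $Z$ is a finite union of pieces of affine hyperplanes, so it is closed, has measure zero, and $J^{s}\setminus Z$ is the disjoint union of $\Omega_{s}$ with the open interiors of the degenerate regions $\Omega_{jk}$, $\Omega_{123}$ appearing in Lemma \ref{ex.cont.3}.

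First I would verify that each extended form is smooth and closed on $J^{s}\setminus Z$. On $\Omega_{s}$ the form $\widetilde{\omega}$ agrees with the original $\omega$, which is closed by Lemma \ref{path}. On the interior of each degenerate region the functions $\widetilde{\theta}_{i}$ are constant (equal to $0$, $\pi$ or $2\pi$) by the very construction in Lemma \ref{ex.cont.3} and Lemma \ref{ex.cont.2}; consequently each numerator $\int_{\pi/2}^{\widetilde{\theta}_{i}}\sin^{h}t\,\mathrm{d}t$ (respectively $\int_{0}^{\widetilde{\alpha}_{i}}\cos^{h}t\,\mathrm{d}t$ for $\widetilde{\omega}^{H}_{\psi,s}$, where $\widetilde{\alpha}_{i}=\frac12(\widetilde{\theta}_{i}-\widetilde{\theta}_{j}-\widetilde{\theta}_{k})$) is a constant $c_{i}$, and $\widetilde{\omega}$ collapses to $\sum_{i} c_{i}\,g_{i}(l_{i})\,\mathrm{d}l_{i}$, where $g_{i}$ depends on $l_{i}$ alone. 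Such a form is trivially closed, since $\mathrm{d}\big(g_{i}(l_{i})\,\mathrm{d}l_{i}\big)=\sum_{j}\partial_{l_{j}}g_{i}(l_{i})\,\mathrm{d}l_{j}\wedge\mathrm{d}l_{i}=0$. Hence all four extended forms are smooth and closed on $J^{s}\setminus Z$.

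It then remains to promote closedness off $Z$ to closedness on all of $J^{s}$, and here the decisive input is that $\widetilde{\omega}$ is continuous across $Z$: the angles $\widetilde{\theta}_{i}$ extend continuously by Lemma \ref{ex.cont.3} and Lemma \ref{ex.cont.2}, and the coefficients of $\widetilde{\omega}$ are continuous functions of these angles. Granting this, I would show $\oint_{\partial R}\widetilde{\omega}=0$ for every small closed rectangle $R\subseteq J^{s}$: the walls cut $R$ into finitely many subregions $R_{\nu}$ on whose interiors $\widetilde{\omega}$ is smooth and closed, so Green's theorem gives $\oint_{\partial R_{\nu}}\widetilde{\omega}=0$; summing over $\nu$, each interior wall segment is traversed twice with opposite orientation, and the two line integrals cancel by continuity of $\widetilde{\omega}$ across $Z$, leaving $\oint_{\partial R}\widetilde{\omega}=0$. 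This is exactly the kind of gluing principle for continuous closed forms collected in Appendix \ref{AppB}, and it yields closedness on $J^{s}$ uniformly for the four families.

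The step I expect to be the main obstacle is the continuity of the \emph{coefficients} of $\widetilde{\omega}$ across $Z$, not the gluing. One must check that the limiting integrals $\int_{\pi/2}^{\widetilde{\theta}_{i}}\sin^{h}t\,\mathrm{d}t$ and their analogues remain finite as $\widetilde{\theta}_{i}\to 0,\pi,2\pi$, which is where the behaviour of $\sin^{h}t$ (and $\cos^{h}t$) near the endpoints enters and may constrain the argument; once these coefficients are seen to extend continuously, the cancellation-across-walls computation is routine.
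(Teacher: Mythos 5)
Your argument is essentially the paper's: the paper checks continuity of the extended forms on $J^{s}$ (via Lemmas \ref{ex.cont.3} and \ref{ex.cont.2}), closedness on $\Omega_{s}$ (Lemma \ref{path}) and on each component of the complement (where the $\widetilde{\theta}_{i}$ are constant so the form has coefficients depending on $l_{i}$ alone), and then invokes the gluing principle of Proposition \ref{B1}, whose content is precisely your Green's-theorem cancellation across the degeneracy walls. The one place you go beyond the paper is your final caveat, and it is a genuine rather than routine point: the coefficient $\int_{\pi/2}^{\widetilde{\theta}_{i}}\sin^{h}t\,\mathrm{d}t$ diverges as $\widetilde{\theta}_{i}\to 0$ or $\pi$ when $h\leq-1$ (and similarly for the $\cos^{h}$ integrals as the upper limit tends to $\pm\pi/2$), so the continuity hypothesis of the gluing step is not automatic for all $h$, a restriction the paper's proof passes over in silence.
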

\begin{proof}
First, we want to show that $\widetilde{\omega}^{E}_{\phi,s}$ is closed. By Proposition \ref{B1} , we need to verify the following three propositions.
\begin{itemize}
\item[$(i)$] Differential form $\widetilde{\omega}^{E}_{\phi,s}$ is continuous on $J^{s}$ and closed on $\Omega_{s}^{E}$.
\item[$(ii)$] Differential form $\widetilde{\omega}^{E}_{\phi,s}$ is closed on each connected component of $J^{s}\setminus\Omega_{s}^{E}$.
\item[$(iii)$] The boundary of $\Omega_{s}^{E}$ is a $(s-1)$-dimensional smooth submanifold in $J^{s}$.
\end{itemize}
By Lemma \ref{ex.cont.3}, Lemma \ref{ex.cont.2}, $(i)$ is proved. Proposition $(iii)$ follows from the construction of $\Omega_{s}^{E}$. Proposition $(ii)$ is trivial because $\widetilde{\theta}_{i}$ is constant on each connected components of $J^{s}\setminus\Omega_{s}^{E}$. The proof of closeness of $\widetilde{\omega}^{H}_{\phi,s}$, $\widetilde{\omega}^{S}_{\phi,s}$ and $\widetilde{\omega}^{H}_{\psi,s}$ is quite similar to the proof of the closeness of $\widetilde{\omega}^{E}_{\phi,s}$.
\end{proof}
Similarly, replacing $\omega^{E}_{\phi,s}$, $\omega^{H}_{\psi,s}$, $\omega^{S}_{\phi,s}$ and $\omega^{H}_{\phi,s}$ in $F^{E}_{s}$, $F^{H}_{s}$, $F^{S}_{s}$ and $G^{H}_{s}$ with $\widetilde{\omega}^{E}_{\phi,s}$, $\widetilde{\omega}^{H}_{\psi,s}$, $\widetilde{\omega}^{S}_{\phi,s}$ and $\widetilde{\omega}^{H}_{\phi,s}$, we obtain four new energy functionals $\widetilde{F}^{E}_{s}$, $\widetilde{F}^{H}_{s}$, $\widetilde{F}^{S}_{s}$ and $\widetilde{G}^{H}_{s}$. By Lemma \ref{closeness}, these four energy functionals are well-defined on $\xi(J^{s})$ and $\gamma(J^{s})$. Now we want to prove the convexity of these four energy functionals.
\begin{lemma}\label{convex}
The energy functionals $\widetilde{F}^{E}_{s}$, $\widetilde{F}^{H}_{s}$, $\widetilde{F}^{S}_{s}$ are convex on $\xi(J^{s})$ when $s=1,2,3$ and the energy functional $\widetilde{G}^{H}_{s}$ are convex on $\gamma(J^{s})$ when $s=1,2$.
\end{lemma}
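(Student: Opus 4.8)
The plan is to present each extended functional as a $C^1$ function on an open convex domain which is convex on each of the two kinds of region into which the coordinate box $J^s$ is cut, and then to glue these local convexities together across the separating interface. First I would record that the domains are open and convex: since $\xi_h$ and $\gamma_h$ are strictly monotone homeomorphisms of the interval $J$ onto an interval (Lemma \ref{homeomorphism}) and act coordinate-wise, $\xi(J^s)=\prod_{i=1}^{s}\xi_h(J)$ and $\gamma(J^s)=\prod_{i=1}^{s}\gamma_h(J)$ are products of open intervals, hence open and convex. Next I would check that $\widetilde{F}^E_s$, $\widetilde{F}^H_s$, $\widetilde{F}^S_s$, $\widetilde{G}^H_s$ are $C^1$. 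Each is the line integral of the closed continuous $1$-form $\widetilde{\omega}$ (Lemma \ref{closeness}), so by Poincar\'e's Lemma it is well-defined and path-independent with gradient equal to the coefficient vector of $\widetilde{\omega}$. After the change of variable $u=\xi(l)$ (resp. $\gamma(l)$) these coefficients simplify; for instance $\partial\widetilde{F}^E_s/\partial u_i=\int_{\pi/2}^{\widetilde{\theta}_i}\sin^{h}t\,\mathrm{d}t$, and analogously for the others. Since the extended angles $\widetilde{\theta}_i$ (hence $\widetilde{\alpha}_i=\tfrac12(\widetilde{\theta}_i-\widetilde{\theta}_j-\widetilde{\theta}_k)$) are continuous on $J^s$ by Lemmas \ref{ex.cont.3} and \ref{ex.cont.2}, these partials are continuous, so each functional is $C^1$.

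The box $J^s$ splits as $\Omega_s\cup(J^s\setminus\Omega_s)$, and I would establish convexity on each piece separately. On $\xi(\Omega_s)$ (resp. $\gamma(\Omega^H_s)$) the extended functional coincides with the original one, which is locally convex by Lemma \ref{locconv}; this is precisely where $\widetilde{G}^H_s$ must be restricted to $s=1,2$, since only there is $G^H_s$ known to be locally strictly convex. On each connected component of $J^s\setminus\Omega_s$ the extended angles are constant, so the gradient computed above is constant and the functional is affine, hence convex with vanishing Hessian.

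It remains to glue. By Lemma \ref{closeness}$(iii)$ the interface $\partial\Omega_s$ is a codimension-one smooth, indeed real-analytic, submanifold, and $\xi,\gamma$ are analytic diffeomorphisms, so the image interface is again analytic. I would then invoke the principle that a $C^1$ function on an open convex set that is convex on the closure of each region cut out by such an interface is convex on the whole set. Restricting to an arbitrary segment $u(t)$, the composite $g(t)=\widetilde{F}(u(t))$ is $C^1$, so $g'$ is continuous; pulling back to the $l$-coordinates, the preimage of the interface is the zero set of a real-analytic function of $t$, so unless the segment lies in the interface it meets it in finitely many points. These points divide $[0,1]$ into finitely many subintervals on each of which $u(t)$ stays in one region, so $g$ is convex there and $g'$ is non-decreasing; continuity of $g'$ at the junctions then forces $g'$ to be non-decreasing on all of $[0,1]$, so $g$ is convex. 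Segments lying in the interface are handled by a density argument, convexity being a closed condition.

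I expect this last gluing step to be the main obstacle. The key points to verify carefully are that the first derivatives genuinely match across the interface, so that no singular term contributes to the second derivative, and that the way a segment meets the interface is not pathological. The analyticity of $\xi,\gamma$ together with the affine form of the defining equations $l_j+l_k=l_i$ (and $l_1+l_2+l_3=2\pi$ in the spherical case) is exactly what excludes pathological intersections and lets the piecewise monotonicity of $g'$ glue into global monotonicity; once this is in place, convexity of each $g$ along every segment yields convexity of $\widetilde{F}^E_s$, $\widetilde{F}^H_s$, $\widetilde{F}^S_s$ on $\xi(J^s)$ for $s=1,2,3$ and of $\widetilde{G}^H_s$ on $\gamma(J^s)$ for $s=1,2$.
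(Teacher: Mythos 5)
Your argument is correct and follows the paper's proof in all essentials: you verify the same three hypotheses the paper checks, namely continuity of the extended $1$-form (hence that the functional is $C^1$ with the stated gradient), real-analyticity of the codimension-one interface bounding $\xi(\Omega^{E}_{s})$, and local convexity on $\xi(\Omega^{E}_{s})$ via Lemma \ref{locconv} together with affineness on the complementary components where the extended angles are constant. The only difference is that the paper then simply invokes Proposition \ref{B2} to glue, whereas you re-derive that gluing principle by restricting to segments, using finiteness of the zero set of a nontrivial real-analytic function and continuity of $g'$ at the crossing points (with a density argument for segments lying in the interface); this is a sound, self-contained substitute for the citation.
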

\begin{proof}
First, we want to show that $\widetilde{F}^{E}_{s}$ is convex on $\xi(J^{s})$. Owing to Proposition \ref{B2}, we need to check the following three propositions.
\begin{itemize}
\item[$(i)$] Differential form $\widetilde{\omega}^{E}_{h,s}$ is continuous on $J^{s}$.
\item[$(ii)$] The boundary of $\xi(\Omega^{E}_{s})$ is a real analytic codimension-$1$ submanifold in $\xi(J^{s})$.
\item[$(iii)$] Function $\widetilde{F}^{E}_{s}$ is locally convex on $\xi(\Omega^{E}_{s})$ and each connected component of $\xi(J^{s})\setminus \overline{\xi(\Omega_{s}^{E})}$.
\end{itemize}
By Lemma \ref{ex.cont.3}, Lemma \ref{ex.cont.2}, $(i)$ is proved. Combining the definition of $\Omega^{E}_{s}$ and Lemma \ref{homeomorphism}, $(ii)$ is established. Lemma \ref{locconv} yields the locally convexity of $\widetilde{F}^{E}_{s}$ on $\xi(\Omega^{E}_{s})$ directly. It is easy to see
\[\xi(J^{s})\setminus \overline{\xi(\Omega^{E}_{s})}=\interior\xi(\Omega_{12})\cup\interior \xi(\Omega_{23})\cup\interior\xi(\Omega_{13}).\]
From
\[\frac{\partial\widetilde{F}^{E}_{s}}{\partial u_{i}}=\int^{\widetilde{\theta}_{i}}_{\pi/2}\sin^{h}t\mathrm{d}t,\]
we know that $\widetilde{F}^{E}_{s}$ is a linear function on $\interior\xi(\Omega_{ij})$. Thus $(iii)$ is proved. Similar deduction gives the convexity of $\widetilde{F}^{H}_{s}$, $\widetilde{F}^{S}_{s}$ and $\widetilde{G}^{H}_{s}$.
\end{proof}

\section{Energy functionals on the moduli spaces of circle packing metrics}\label{4}
Following the idea of the previous section, we will construct the convex functionals on the moduli space of triangles for circle packing metric. We still begin by introducing a continuous map defined on the moduli space of triangles. Let $\tau$  be a triangle in a bordered triangulated surface. If all vertices of $\tau$ are interior vertices, denote $r=(r_{1},r_{2},r_{3})\in\mathbb{R}^{3}_{+}$ as the circle packing metric on $\tau$. Then the continuous map $\gamma$ is defined as
\begin{equation}\label{g}
g(r)=\left(g_{h}(r_{1}),g_{h}(r_{2}),g_{h}(r_{3})\right).
\end{equation}
where
\[g_{h}(t)=\left\{
             \begin{aligned}
             &\frac{t^{h}}{h},  h\neq0, &\tau\ \text{is a Euclidean triangle}, \\
             &\ln{t},  h=0, &\tau\ \text{is a Euclidean triangle},\\
             &\int^{t}_{1}\sinh^{h-1}\frac{x}{2}\mathrm{d}x, &\tau\ \text{is a hyperbolic triangle}.
             \end{aligned}
\right.\]
Similarly, the continuous map $g$ can be defined on the moduli space of the triangles which have one or two interior vertices. Because $g_{h}(t)$ is monotonic, the continuous map $g$ is homeomorphic to its image.

Now it is ready to introduce convex functionals defined on the moduli space of triangles. Define line integral function
\begin{equation}\label{cicfunc}
             C^{E}_{s}(u):=-\int^{u}\eta^{E}_{s}, u\in g(\mathbb{R}^{3}_{+}),\quad
             C^{H}_{s}(u):=-\int^{u}\eta^{H}_{s}, u\in g(\mathbb{R}^{3}_{+}).
\end{equation}
By Poincar\'{e} Lemma and Lemma \ref{path}, above functions are well-defined.
\begin{lemma}\label{cpcon}
Let $C^{E}_{s}(u)$, $C^{H}_{s}(u)$ be defined in (\ref{cicfunc}). The following statements hold:
\begin{itemize}
\item[$(a)$]The integral $C^{E}_{3}(u)$ is convex, $C^{E}_{s}(u)$ is strictly convex when $s=1,2$.
\item[$(b)$]The integral $C^{H}_{s}(u)$ is strictly convex for $s=1,2,3$.
\end{itemize}
\end{lemma}
\begin{proof}
Some computations yield that $C^{E}_{s}(u)$, $C^{H}_{s}(u)$ is smooth. It suffices to verify the positive-definiteness of the Hessian matrix of $C^{E}_{s}(u)$, $C^{H}_{s}(u)$. First we prove $(a)$. By (\ref{g}), we have
\[
\eta^{E}_{3}=\sum_{i=1}^{3}\frac{\int_{\pi/2}^{\theta_{i}}\tan^{h}{\frac{t}{2}}\mathrm{d}t}{r_{i}^{-h+1}}\mathrm{d}r_{i}
=\sum_{i=1}^{3}\int_{\pi/2}^{\theta_{i}}\tan^{h}{\frac{t}{2}}\mathrm{d}t\mathrm{d}u_{i}.\]
A regular computation gives that
\[\begin{aligned}
\frac{\partial^{2}C^{E}_{3}}{\partial u_{i}\partial u_{j}}&=-\tan^{h}\frac{\theta_{i}}{2}\frac{\partial\theta_{i}}{\partial u_{j}}\\
&=-\tan^{h}\frac{\theta_{i}}{2}\frac{\partial\theta_{i}/\partial r_{j}}{g'_{h}(r_{j})}\\
&=-(\tan\frac{\theta_{i}}{2}r_{i})^{h}(r_{i}r_{j})^{-h}r_{j}\frac{\partial\theta_{i}}{\partial r_{j}}.
\end{aligned}\]
By Proposition \ref{tan1},  $\tan\frac{\theta_{i}}{2}r_{i}$ is a positive number independent of the indices. Hence the Hessian matrix of $C^{E}_{3}(u)$ is congruent to $[a_{st}]$ where
\[a_{ij}=-r_{j}\frac{\partial\theta_{i}}{\partial r_{j}}.\]
From Lemma \ref{2.1}, we know
\begin{equation}\label{aii}\begin{aligned}
a_{ii}=-r_{i}\left(\frac{\partial\theta_{i}}{\partial l_{j}}+\frac{\partial\theta_{i}}{\partial l_{k}}\right)
=r_{i}\frac{\partial\theta_{i}}{\partial l_{i}}(\cos\theta_{k}+\cos\theta_{j})>0.
\end{aligned}\end{equation}
Similarly,
\begin{equation}\label{aij}\begin{aligned}
a_{ij}=-r_{j}\left(\frac{\partial\theta_{i}}{\partial l_{i}}+\frac{\partial\theta_{i}}{\partial l_{k}}\right)
=r_{j}\frac{\partial\theta_{i}}{\partial l_{i}}(\cos\theta_{j}-1)<0.
\end{aligned}\end{equation}
Hence,
\begin{equation}\label{diag}\begin{aligned}
|a_{ii}|-|a_{ji}|-|a_{ki}|&=a_{ii}+a_{ji}+a_{ki}\\
&=-r_{i}\frac{\partial(\theta_{i}+\theta_{j}+\theta_{k})}{\partial r_{i}}=0.
\end{aligned}\end{equation}
Thus the matrix $[a_{st}]$ is positive semi-definite due to the diagonal dominate property. Thus the Hessian matrix of $C^{E}_{3}(u)$ is positive semi-definite, which yields that $C^{E}_{3}(u)$ is convex. From (\ref{diag}) we also derive that
\[
|a_{ii}|-|a_{ji}|>0,
\]
which yields that the Hessian matrix of $C^{E}_{2}(u)$ is positive definite. Thus $C^{E}_{2}(u)$ is strictly convex. From (\ref{aii}) we know
\[\frac{\mathrm{d}^{2}C^{E}_{1}(u)}{\mathrm{d}u^{2}}>0,\]
which yields that $C^{E}_{1}(u)$ is strictly convex.

Next we prove $(b)$. An analogous analysis derives that the Hessian matrix of $C^{H}_{3}(u)$ are congruent to $[b_{st}]$ where
\begin{equation}\label{bij}b_{ij}=-\sinh r_{j}\frac{\partial\theta_{i}}{\partial r_{j}}.\end{equation}
Similar deduction to (\ref{aii}), (\ref{aij}) gives $b_{ii}>0, b_{ij}<0$.
Combining (\ref{bij}) and Lemma \ref{chow-luo}, we obtain
\[\begin{aligned}
|b_{ii}|-|b_{ji}|-|b_{ki}|&=b_{ii}+b_{ji}+b_{ki}\\
&=-\sinh r_{i}\frac{\partial(\theta_{i}+\theta_{j}+\theta_{k})}{\partial r_{i}}>0\\
\end{aligned}\]
which yields the strict convexity of $C^{H}_{3}(u)$. A similar deduction to the proof of $(a)$ gives the strict convexity of $C^{H}_{1}(u)$, $C^{H}_{2}(u)$.
\end{proof}

\section{Proof of main results}\label{5}
Let us prove our main results via variational principle in this section. We begin by introducing some notations that will be used in the proofs. For any polyhedral metric $d$ on $(S,T)$, denote
\[l_{d}=\big(d(e_{1}),\cdots,d(e_{n})\big)\] as the coordinate of $d$, where $\{e_{i}\}^{n}_{i=1}$ are all interior edges of $(S,T)$. Denote $u_{d}=\xi(l_{d})$
as the $u$-coordinate of $d$, where $\xi$ is defined by (\ref{xi}). Suppose that $\tau$ is a triangle in $T$. We write the coordinate of interior edges of $\tau$ in $d$ as $l^{\tau}_{d}$, and set $u^{\tau}_{d}=\xi(l^{\tau}_{d})$.
For $s=1,2,3$, denote
\[T_{s}=\left\{\tau\in T\ |\ \#s\ \text{edges in}\ \tau\ \text{are interior edges}\right\}.\]

Next, we construct some functions to be used in the proofs. They are constructed under the assumption $\cup_{s=1}^{3}T_{s}\neq\emptyset$. Define a function $W^{E}:\xi_{h}(\mathbb{R}_{+})^{n}\rightarrow\mathbb{R}$ by
\[W^{E}(u_{d})=\sum\nolimits_{s=1}^{3}\sum\nolimits_{\tau\in T_{s}}\widetilde{F}^{E}_{s}(u^{\tau}_{d}).\]
For each interior edge $e_{i}$, there exist two triangles
$\tau_{1}^{i},\ \tau_{2}^{i}\in\cup_{s=1}^{3}T_{s}$
such that $\tau_{1}^{i}\cap\tau_{2}^{i}=e_{i}$. Hence we have
\[\begin{aligned}
\left.\frac{\partial W^{E}}{\partial u_{i}}\right|_{u_{d}}&=\frac{\partial\widetilde{F}^{E}_{s}(u^{\tau^{i}_{1}}_{d})}{\partial u_{i}}+\frac{\partial\widetilde{F}^{E}_{s'}(u^{\tau^{i}_{2}}_{d})}{\partial u_{i}}\\
&=\int^{\widetilde{\theta}_{1}^{i}}_{\pi/2}\sin^{h}t\mathrm{d}t+\int^{\widetilde{\theta}_{2}^{i}}_{\pi/2}\sin^{h}t\mathrm{d}t,
\end{aligned}\]
where $\theta_{1}^{i}$, $\theta_{2}^{i}$ are angles facing $e_{i}$ in $\tau_{1}^{i}$, $\tau_{2}^{i}$, respectively. Note that
\[\left.\frac{\partial W^{E}}{\partial u_{i}}\right|_{u_{d}}=\phi_{h}(e_{i}).\]
It follows that
\[\nabla W^{E}(u_{d})=\big(\phi_{h}(e_{1}),\cdots,\phi_{h}(e_{n})\big).\]
Suppose that $u_{d_{1}}$ and $u_{d_{2}}$ are $u$-coordinates of two distinct polyhedral metrics. Set
\[f^{E}(t)=W^{E}\left(tu_{d_{1}}+(1-t)u_{d_{2}}\right).\]
\begin{lemma}\label{near}
Suppose $d_{1}$, $d_{2}$ are two distinct polyhedral metrics on $(S,T)$. Then $f^{E}(t)$ is strictly convex for $t$ near $0$,$1$.
\end{lemma}
\begin{proof}
For each $\tau\in T_{s}$, we would like to consider
\[f^{\tau}_{s}(t)=\widetilde{F}^{E}_{s}\left(tu^{\tau}_{d_{1}}+(1-t)u^{\tau}_{d_{2}}\right).\]
It follows that
\[
f^{E}(t)=\sum\nolimits_{s=1}^{3}\sum\nolimits_{\tau\in T_{s}}f^{\tau}_{s}(t).
\]
For each $\tau\in T$, if $tu^{\tau}_{d_{1}}+(1-t)u^{\tau}_{d_{2}}\in\xi\left(\Omega^{E}_{s}\right)$, we have
\begin{equation}
\label{''}
{f^{\tau}_{s}}''(t)=\left\{
\begin{aligned}
&\left(u^{\tau}_{d_{1}}-u^{\tau}_{d_{2}}\right)^{\mathrm{T}}\Hessian\widetilde{F}^{E}_{s}\left(tu^{\tau}_{d_{1}}+(1-t)u^{\tau}_{d_{2}}\right)\left(u^{\tau}_{d_{1}}-u^{\tau}_{d_{2}}\right),&s=2,3,\\
&\left(u^{\tau}_{d_{1}}-u^{\tau}_{d_{2}}\right)^2\widetilde{F}^{E}_{s}{''}\left(tu^{\tau}_{d_{1}}+(1-t)u^{\tau}_{d_{2}}\right),&s=1.
\end{aligned}
\right.
\end{equation}
By Lemma \ref{locconv}, we have ${f^{\tau}_{s}}''(t)\geq0$ if $tu^{\tau}_{d_{1}}+(1-t)u^{\tau}_{d_{2}}\in\xi\left(\Omega^{E}_{s}\right)$.
Recall that $\xi$ is a homomorphic map from the moduli space of triangles to its image. So $u_{d_{1}}^{\tau}$, $u_{d_{2}}^{\tau}$ are interior points for each $\tau\in T$. Hence, for each $\tau\in T$, there exists $\varepsilon_{\tau}>0$ such that $tu^{\tau}_{d_{1}}+(1-t)u^{\tau}_{d_{2}}$ is $u$-coordinate of a Euclidean metric for any $t\in(0,\varepsilon_{\tau})\cup(1-\varepsilon_{\tau},1)$. Set $\varepsilon_{0}=\min\left\{\varepsilon_{\tau}\ |\ \tau\in T\right\}$. Hence, we have
\[{f^{E}}''(t)=\sum\nolimits_{s=1}^{3}\sum\nolimits_{\tau\in T_{s}}{f^{\tau}_{s}}''(t)\geq 0\]
for $t\in(0,\varepsilon_{0})\cup(1-\varepsilon_{0},1)$.
According to (\ref{''}) and the Hessian matrix of $F^{E}_{s}(u)$, we derive the following three claims.
\begin{itemize}
\item[$(i)$] For $\tau\in T_{1}$, ${f^{\tau}_{1}}''(t)=0$ for $t\in(0,\varepsilon_{\tau})\cup(1-\varepsilon_{\tau},1)$ if and only if $u_{d_{1}}^{\tau}=u_{d_{2}}^{\tau}$.
\item[$(ii)$] For $\tau\in T_{2}$, ${f^{\tau}_{2}}''(t)=0$ for $t\in(0,\varepsilon_{\tau})\cup(1-\varepsilon_{\tau},1)$ if and only if $u_{d_{1}}^{\tau}=u_{d_{2}}^{\tau}$.
\item[$(iii)$] For $\tau\in T_{3}$, ${f^{\tau}_{3}}''(t)=0$ for $t\in(0,\varepsilon_{\tau})\cup(1-\varepsilon_{\tau},1)$ if and only if $u_{d_{1}}^{\tau}=cu_{d_{2}}^{\tau}(c\neq0)$.

\end{itemize}

Note that polyhedral surface is connected. That means any two distinct polyhedral metrics $d_{1}$, $d_{2}$ cannot satisfy both $u_{d_{1}}^{\tau}=cu_{d_{2}}^{\tau}$ for $\tau\in T_{3}$ and $u_{d_{1}}^{\tau}=u_{d_{2}}^{\tau}$ for $\tau\in T_{1}\cup T_{2}$, which yields ${f^{E}}''(t)>0$ for any $t\in(0,\varepsilon_{0})\cup(1-\varepsilon_{0},1)$.
Then $f^{E}(t)$ is strictly convex on $(0,\varepsilon_{0})\cup(1-\varepsilon_{0},1)$.
\end{proof}
\begin{proof}[\textbf{Proof of Theorem \ref{mt1}}]
We only prove Theorem \ref{mt1}$(a)$. Theorem \ref{mt1}$(b)$,$(c)$ can be obtained by a similar analysis. If $\cup_{i=s}^{3}T_{s}=\emptyset$, Theorem \ref{mt1}$(a)$ holds obviously. So we suppose $\cup_{i=s}^{3}T_{s}\neq\emptyset$. First we prove $(a)$. Let $d_{1}$, $d_{2}$ be two Euclidean polyhedral metrics that share equal boundary value and $\phi_{h}$-curvature on the interior edges. It suffices to show that $d_{1}=d_{2}$. Let $a\in\mathbb{R}^{n}$ be the $\phi_{h}$-curvature of $d_{1}$, $d_{2}$ on interior edges, where each vector component of $a$ represents the $\phi_{h}$-curvature of an interior edge. Then
\[\widetilde{W}^{E}(u)=W^{E}(u)-a\cdot u\]
is convex and $u_{d_{1}}$, $u_{d_{2}}$ are critical points of $\widetilde{W}^{E}(u)$. Hence
\[\widetilde{W}^{E}\big(tu_{d_{1}}+(1-t)u_{d_{2}}\big)=W^{E}\big(tu_{d_{1}}+(1-t)u_{d_{2}}\big)-a\cdot\big(tu_{d_{1}}+(1-t)u_{d_{2}}\big)\]
is a constant. It follows that
\[f^{E}(t)=W^{E}\big(tu_{d_{1}}+(1-t)u_{d_{2}}\big)\]
is a linear function if $u_{d_{1}}\neq u_{d_{2}}$. Combining with Lemma \ref{near}, we know that $u_{d_{1}}=u_{d_{2}}$. Recall that $f_{1}$ is a homeomorphic map to its image. Thus $l_{d_{1}}=l_{d_{2}}$. That means $d_{1}$ and $d_{2}$ are identical on the interior edges of $(S,T)$. Then we complete the proof.
\end{proof}
The following lemma is a classical result for cyclic polygons, which was first proved by B. Stanko \cite{sta}.
\begin{lemma}\label{cyc}
Let $Q$ be a Euclidean, or hyperbolic, or spherical quadrilateral. Then $Q$ is cyclic if and only if the sum of opposite inner angles of $Q$ are equal.
\end{lemma}

\begin{proof}[\textbf{Proof of Corollary \ref{co}}]
Let $P$ be a Euclidean or hyperbolic cyclic polygon. By adding edges between appropriate pairs of vertices of $P$, it can be realized as a bordered triangulated surface, denoted as $(P,T)$. (see Figure \ref{fig2} for an example). Lemma \ref{cyc} yields that $\psi_{0}(e)=0$ for each interior edge $e$ of $(P,T)$. By Theorem \ref{mt1} $(a)$, $(b)$, the result holds immediately.
\end{proof}

\begin{proof}[\textbf{Proof of Theorem \ref{1.7}}]
If $\cup_{s=1}^{3}T_{s}=\emptyset$, Theorem \ref{1.7} holds obviously. So we suppose that $\cup_{s=1}^{3}T_{s}\neq\emptyset$. Let $d_{1}$ and $d_{2}$ be two hyperbolic polyhedral metrics that have the equal boundary value and $\phi_{h}$-curvature on the interior edges.  It suffices to show that $d_{1}=d_{2}$. Here we set $u_{d}=\gamma(l_{d})$, where $\gamma$ is defined by (\ref{xi}). Denote the number of interior edges by $n$. Then we define a function $V^{E}:\gamma_{h}(\mathbb{R})^{n}\rightarrow\mathbb{R}$ by
\[V^{E}(u_{d})=\sum\nolimits_{s=1}^{2}\sum\nolimits_{\tau\in T_{s}}\widetilde{G}^{E}_{s}(u^{\tau}_{d}).\]
Note that $(S,T)$ is stripped, which means $T_{3}=\emptyset$. Hence, we obtain
\[\nabla V^{E}(u_{d})=\big(\phi_{h}(e_{1}),\cdots,\phi_{h}(e_{n})\big).\]
We denote $b\in\mathbb{R}^{n}$ as the $\phi_{h}$-curvature of $d_{1}$, $d_{2}$ on interior edges. Then
\[\widetilde{V}^{E}(u)=V^{E}(u)-b\cdot u\]
is convex and $u_{d_{1}}$, $u_{d_{2}}$ are critical points of $V^{E}(u)$. By similar analysis as the proof of Theorem \ref{mt1} we conclude that $d_{1}=d_{2}$.
\end{proof}

Let us introduce some similar notions before we prove Theorem \ref{mt2}. For each circle packing metric $r$ on $(S,T)$, denote
$p_{r}=\big(r(v_{1}),\cdots,r(v_{n})\big)$
as the coordinate of $r$ on all interior vertices $\{v_{j}\}^{n}_{j=1}$ of $(S,T)$. Suppose that $\tau$ is a triangle in $T$. We write the coordinate of radius assignment of interior vertices of $\tau$ as $p^{\tau}_{r}$, and set $q^{\tau}_{r}=g(p^{\tau}_{r})$. One can recall Section \ref{4} for the definition of homeomorphism $g$. For $s=1,2,3$, we denote
\[T_{s}=\left\{\tau\in T\ \big|\ \#s\ \text{vertices in}\ \tau\ \text{are interior vertices}\right\}.\]
Define a function $U^{E}:g_{h}(\mathbb{R}_{+})^{|n|}\rightarrow\mathbb{R}$ by
\[U^{E}(q_{r})=\sum\nolimits_{s=1}^{3}\sum\nolimits_{\tau\in T_{s}}C^{E}_{s}(q^{\tau}_{r}).\]
Similarly, we can define $U^{H}(q_{r})$. Note that the triangulated surface is bordered. Hence $T_{1}\cup T_{2}\neq\emptyset$. By Lemma \ref{cpcon}, the following result is straightforward.
\begin{lemma}\label{cpconv}
Function $U^{E}(q_{r})$, $U^{H}(q_{r})$ are strictly convex on $g_{h}(\mathbb{R}_{+})^{|n|}$.
\end{lemma}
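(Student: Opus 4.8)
The plan is to reduce the claim to the positive-definiteness of the Hessians of $U^{E}$ and $U^{H}$ and then to read off the degeneracies from the block structure of these Hessians. Since $g_{h}\colon\mathbb{R}_{+}\to\mathbb{R}$ is a monotone homeomorphism onto an interval, the domain $g_{h}(\mathbb{R}_{+})^{|n|}$ is a product of open intervals, hence open and convex, so it suffices to verify positive-definiteness pointwise. Writing $\pi_{\tau}\colon\mathbb{R}^{|n|}\to\mathbb{R}^{s}$ for the projection onto the coordinates indexed by the interior vertices of a triangle $\tau\in T_{s}$, the definition of $U^{E}$ gives
\[
\Hessian U^{E}=\sum_{s=1}^{3}\sum_{\tau\in T_{s}}\pi_{\tau}^{\mathrm{T}}\,(\Hessian C^{E}_{s})\,\pi_{\tau},
\]
and similarly for $U^{H}$. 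By Lemma \ref{cpcon} every summand is positive semi-definite, so both functionals are convex and the only task is to exclude nonzero null vectors.

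For $U^{H}$ this is immediate. Lemma \ref{cpcon}$(b)$ makes $\Hessian C^{H}_{s}$ positive definite for all $s=1,2,3$, so the summand $\pi_{\tau}^{\mathrm{T}}(\Hessian C^{H}_{s})\pi_{\tau}$ annihilates a vector $v$ precisely when $\pi_{\tau}v=0$. If $v^{\mathrm{T}}(\Hessian U^{H})v=0$, then $\pi_{\tau}v=0$ for every $\tau$; since each interior vertex of $(S,T)$ is an interior vertex of at least one triangle of $T$, every coordinate of $v$ is pinned to $0$, whence $v=0$. Thus $U^{H}$ is strictly convex.

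The Euclidean case is the substantive one, since Lemma \ref{cpcon}$(a)$ only gives $\Hessian C^{E}_{3}$ positive semi-definite. From its proof, $\Hessian C^{E}_{3}$ is congruent, via the positive diagonal matrix $\mathrm{diag}(r_{i}^{-h})$, to the symmetric matrix $[a_{ij}]$ with $a_{ii}>0$, $a_{ij}<0$ and vanishing row sums; being a connected graph-Laplacian-type matrix it has a one-dimensional kernel, spanned by the scaling direction. Concretely, $\pi_{\tau}v$ lies in this kernel iff $v_{i}/r_{i}^{h}$ takes the same value at all three interior vertices $i$ of $\tau$ (with $r_{i}^{0}=1$ when $h=0$). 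Hence a null vector $v$ of $\Hessian U^{E}$ must satisfy $\pi_{\tau}v=0$ for every $\tau\in T_{1}\cup T_{2}$, where $\Hessian C^{E}_{s}$ is positive definite, and $v_{i}/r_{i}^{h}=v_{j}/r_{j}^{h}=v_{k}/r_{k}^{h}$ for every $\tau\in T_{3}$ with interior vertices $i,j,k$. Setting $w_{i}=v_{i}/r_{i}^{h}$, the function $w$ must vanish at every interior vertex lying in a $T_{1}$ or $T_{2}$ triangle and be constant across the three vertices of each $T_{3}$ triangle.

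The main obstacle is to upgrade these local constraints to $w\equiv 0$, and this is exactly where connectedness of $S$ and the bordered hypothesis (equivalently $T_{1}\cup T_{2}\neq\emptyset$) enter, paralleling the connectivity step in the proof of Lemma \ref{near}. The plan is the following argument: suppose $w\not\equiv 0$ and pick an interior vertex $v_{0}$ with $w(v_{0})=\lambda\neq 0$. Then every triangle in the star of $v_{0}$ must lie in $T_{3}$ (otherwise $v_{0}$ sits in a $T_{1}$ or $T_{2}$ triangle and $w(v_{0})=0$), and all of its vertices are interior and again carry the value $\lambda$. Consequently the union $K$ of all triangles whose vertices are interior and carry $w=\lambda$ contains the full star of each of its vertices, so $K$ is open; it is also closed as a finite union of closed triangles, and nonempty. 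Connectedness of $S$ forces $K=S$, which would make every vertex of $(S,T)$ interior, contradicting the existence of boundary vertices on the bordered surface. Hence $\lambda=0$, so $w\equiv 0$, i.e. $v=0$, and $U^{E}$ is strictly convex, completing the proof.
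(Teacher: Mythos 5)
Your proof is correct, and it follows the route the paper intends: decompose $\Hessian U^{E}$ and $\Hessian U^{H}$ into per-triangle blocks and invoke Lemma \ref{cpcon} together with $T_{1}\cup T_{2}\neq\emptyset$. The paper, however, stops there and declares the lemma ``straightforward,'' whereas your Euclidean argument --- identifying the one-dimensional scaling kernel of $\Hessian C^{E}_{3}$ and then propagating the constraint $w\equiv\lambda$ through $T_{3}$-stars via the open--closed connectivity argument until it collides with a boundary vertex --- is precisely the nontrivial content that the paper's assertion suppresses (it is the analogue, for circle packing metrics, of the connectedness step in Lemma \ref{near}). So your write-up is not a different proof but a complete one; the paper's version, read literally, has a gap exactly where your third and fourth paragraphs do the work.
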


\begin{proof}[\textbf{Proof of Theorem \ref{mt2}}]
If all vertices are boundary vertices, Theorem \ref{mt2} holds obviously. So we assume that $\cup_{s=1}^{3}T_{s}\neq\emptyset$. By the construction of $U^{E}$ we obtain
\[\nabla U^{E}(q_{r})=\left(-\sum\nolimits_{i=1}^{m_{1}}\int_{\pi/2}^{\theta_{i}^{1}}\tan^{h}(t/2)\textit{d}t,\cdots,-\sum\nolimits_{i=1}^{m_{n}}\int_{\pi/2}^{\theta_{i}^{n}}\tan^{h}(t/2)\textit{d}t\right),\]
where $\theta_{1}^{j},\cdots,\theta_{m_{j}}^{j}$ are all inner angles at vertex $v_{j}$. Combining Lemma \ref{cpconv} and Lemma \ref{importantlemma}, we obtain a homeomorphic map from the coordinate of $r$ to its $k_{h}$-curvature. This map is injective, obviously. Therefore, if two Euclidean circle packing metrics have the equal $k_{h}$-curvature on the interior vertices, then there are equal on the interior vertices. This proves $(a)$. The proof of $(b)$ can be completed by using the same analysis to $U^{H}(q_{r})$.
\end{proof}
The following lemma is an elementary result from analysis. We omit the proof here.
\begin{lemma}\label{importantlemma}
If $\Omega$ is open convex in $\mathbb{R}^{n}$ and $W:\Omega\rightarrow\mathbb{R}$ is a smooth strictly convex function with positive definite Hessian matrix, then $\nabla W:\Omega\rightarrow\mathbb{R}^{n}$ is a smooth embedding.
\end{lemma}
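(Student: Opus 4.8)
The plan is to verify the three defining properties of a smooth embedding in turn: that $\nabla W$ is (i) smooth and injective, (ii) an immersion, and (iii) a homeomorphism onto its image. Smoothness is immediate, since $\nabla W=(\partial_{1}W,\dots,\partial_{n}W)$ and $W$ is assumed smooth. The substantive content lies in deducing injectivity from strict convexity and then promoting the immersion property to a genuine embedding; because the domain and target have the same dimension $n$, the last step is essentially automatic once injectivity and local invertibility are in hand.

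First I would establish injectivity via monotonicity of the gradient. Fix distinct points $x,y\in\Omega$. Since $\Omega$ is convex, the segment $\gamma(t)=x+t(y-x)$, $t\in[0,1]$, lies in $\Omega$, so $f(t):=W(\gamma(t))$ is a smooth function on $[0,1]$. Differentiating gives $f'(t)=\langle\nabla W(\gamma(t)),\,y-x\rangle$ and $f''(t)=(y-x)^{\mathsf{T}}\,\Hessian W(\gamma(t))\,(y-x)$. By hypothesis the Hessian is positive definite, so $f''(t)>0$ for all $t$, whence $f'$ is strictly increasing on $[0,1]$. In particular $f'(1)>f'(0)$, which reads
\[
\langle\nabla W(y)-\nabla W(x),\,y-x\rangle>0.
\]
Since $x\neq y$, this forces $\nabla W(y)-\nabla W(x)\neq0$, giving injectivity.

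Next I would treat the immersion and the topological statement together. The Jacobian matrix of $\nabla W$ at each point is exactly $\Hessian W$, which is positive definite and hence invertible throughout $\Omega$. By the inverse function theorem $\nabla W$ is therefore a local diffeomorphism; in particular it is an immersion and an open map. An open continuous injection is a homeomorphism onto its image: the image $\nabla W(\Omega)$ is open in $\mathbb{R}^{n}$, and the set-theoretic inverse is continuous precisely because $\nabla W$ is open. Combining this with the injectivity of the previous step and with local invertibility, $\nabla W$ is an injective local diffeomorphism between equal-dimensional manifolds, hence a diffeomorphism onto the open set $\nabla W(\Omega)$, and in particular a smooth embedding.

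The only genuine subtlety, and the place where I expect the argument to hinge, is the injectivity step: it is precisely here that strict convexity (encoded in positive definiteness of the Hessian) and the convexity of the domain $\Omega$ are used, the latter guaranteeing that the connecting segment stays in $\Omega$ so that the one-variable reduction is valid. Once injectivity is secured, the equality of dimensions makes the passage from local diffeomorphism to embedding routine, so no properness or compactness hypotheses are needed.
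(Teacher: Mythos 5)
Your proof is correct: the monotonicity argument $\langle\nabla W(y)-\nabla W(x),\,y-x\rangle>0$ for injectivity, together with the inverse function theorem for the local diffeomorphism property, is the standard and complete argument. The paper explicitly omits its proof of this lemma (citing it as a well-known fact from analysis), and your write-up supplies exactly the argument the authors implicitly invoke, with no gaps.
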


\begin{proof}[\textbf{Proof of Corollary \ref{cocp}}]
Let $P$ be a triangle-type circle packing. There is a natural way to build the bordered triangulated surface $(S,T)_{P}$ that isomorphic to the contact graph of $P$. The vertex set of $(S,T)_{P}$ is the set of centers of circles in $P$, denote as $V_{P}$. The edge set of $(S,T)_{P}$ is the set of edges joining the centers of each pair of mutually tangent circles in $P$. For any radius assignment $R:V\to\mathbb{R}_{+}$ of $P$, one can induce a circle packing metric on $(S,T)_{P}$ by $r_{R}(v)=R(v)$ for $v\in V_{P}$. Suppose $R_{1}$, $R_{2}$ are two radius assignments of $P$ (keeping the contact graph of $P$ unchanged) that equal on each boundary circle. Then $r_{R_{1}}$, $r_{R_{2}}$ are two circle packing metrics on $(S,T)_{P}$ having equal boundary value. Because $P$ is a planar circle packing, $(S,T)_{P}$ is a bordered triangulated surface on Euclidean plane. Recall that $k_{0}(v)=2\pi-\sum_{i=1}^{m}\theta_{i}$, where $\theta_{1},\cdots,\theta_{1}$ are inner angles at $v$. Hence $k_{0}$-curvature of $r_{R_{1}}$, $r_{R_{2}}$ for each interior vertex of $(S,T)_{P}$ equals to zero. By Theorem\ref{mt2}, we obtain $r_{R_{1}}=r_{R_{2}}$. Thus $R_{1}=R_{2}$. This completes the proof.
\end{proof}

\begin{appendices}
\section{Some formulas in Trigonometry}\label{AppA}
This section is devoted to some results from trigonometry.
\begin{proposition}[Cosine law]\label{cosine}
Let $l_{1}$, $l_{2}$, $l_{3}$ be the edge lengths of a geometric triangle and $\theta_{1}$, $\theta_{2}$, $\theta_{3}$ be the opposite inner angles. Set $\{i,j,k\}=\{1,2,3\}$.
\begin{itemize}
\item[$(a)$]In Euclidean geometry, we have
\[\cos\theta_{i}=\frac{-l^2_{i}+l^2_{j}+l^2_{k}}{2l_{j}l_{k}},\quad 1=\frac{\cos\theta_{i}+\cos\theta_{j}\cos\theta_{k}}{\cos\theta_{j}\cos\theta_{k}}.\]
\item[$(b)$]In hyperbolic geometry, we have
\[\cos\theta_{i}=\frac{-\cosh l_{i}+\cosh l_{j}\cosh l_{k}}{\sinh l_{j}\sinh l_{k}},\quad\cosh l_{i}=\frac{\cos\theta_{i}+\cos\theta_{j}\cos\theta_{k}}{\sin\theta_{j}\sin\theta_{k}}.\]
\item[$(c)$]In spherical geometry, we have
\[\cos\theta_{i}=\frac{\cos l_{i}-\cos l_{j}\cos l_{k}}{\sin l_{j}\sin l_{k}},\quad\cos l_{i}=\frac{\cos\theta_{i}+\cos\theta_{j}\cos\theta_{k}}{\sin\theta_{j}\sin\theta_{k}}.\]
\end{itemize}
\end{proposition}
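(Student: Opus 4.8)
The plan is to realize each triangle inside the standard model of its geometry and to read off both cosine laws from inner-product computations in that model. For the Euclidean part $(a)$ I would place the triangle in $\mathbb{R}^2$ and write the edge opposite $\theta_i$ as the difference of the two edge vectors emanating from the vertex carrying $\theta_i$; expanding $l_i^2=|(\text{edge}_j)-(\text{edge}_k)|^2$ with the standard dot product gives $l_i^2=l_j^2+l_k^2-2l_jl_k\cos\theta_i$, which rearranges to the first identity in $(a)$. The second identity in $(a)$ is not an independent geometric fact: a Euclidean triangle satisfies $\theta_1+\theta_2+\theta_3=\pi$, so $\cos\theta_i=-\cos(\theta_j+\theta_k)=-\cos\theta_j\cos\theta_k+\sin\theta_j\sin\theta_k$, and substituting this into the stated numerator leaves a pure identity among the angles; thus $(a)$ follows once the angle-sum relation is recorded.

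For the spherical case $(c)$ I would work on the unit sphere $S^2\subset\mathbb{R}^3$ with vertices $P_1,P_2,P_3$ taken as unit vectors, $l_i$ denoting the side opposite $P_i$. Since arc length equals the angle between position vectors, $\langle P_j,P_k\rangle=\cos l_i$ and $\langle P_i,P_j\rangle=\cos l_k$. The interior angle $\theta_i$ is the angle between the unit tangents at $P_i$ along the two incident arcs; these are the normalizations of the projections $P_j-\langle P_i,P_j\rangle P_i$ and $P_k-\langle P_i,P_k\rangle P_i$ onto the tangent plane $P_i^{\perp}$, of norms $\sin l_k$ and $\sin l_j$. Expanding their inner product and using the three side relations collapses the cross terms to $\cos l_i-\cos l_j\cos l_k$, giving exactly $\cos\theta_i=(\cos l_i-\cos l_j\cos l_k)/(\sin l_j\sin l_k)$. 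For the dual identity I would pass to the polar triangle, whose sides are $\pi-\theta_i$ and whose angles are $\pi-l_i$; applying the just-proved direct law to it and simplifying with supplementary-angle identities yields $\cos l_i=(\cos\theta_i+\cos\theta_j\cos\theta_k)/(\sin\theta_j\sin\theta_k)$ (so the $\cosh l_i$ printed in $(c)$ should read $\cos l_i$).

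The hyperbolic case $(b)$ runs in parallel on the upper hyperboloid $\{x:\langle x,x\rangle=-1,\ x_0>0\}$ in Minkowski space $\mathbb{R}^{2,1}$, with $\langle\cdot,\cdot\rangle$ the Lorentzian form. Here the distance formula gives $\langle P_j,P_k\rangle=-\cosh l_i$; projecting $P_j,P_k$ onto the spacelike tangent space $P_i^{\perp}$ produces vectors of Lorentzian norms $\sinh l_k$ and $\sinh l_j$, and the same expansion now yields $\langle v,w\rangle=-\cosh l_i+\cosh l_j\cosh l_k$, hence the first identity in $(b)$. For the dual identity $\cosh l_i=(\cos\theta_i+\cos\theta_j\cos\theta_k)/(\sin\theta_j\sin\theta_k)$ I would either invoke the hyperbolic analogue of polar duality, passing to the spacelike outward normals of the faces in de Sitter space, or argue purely algebraically by treating the three direct formulas for $\cos\theta_1,\cos\theta_2,\cos\theta_3$ as equations in $\cosh l_1,\cosh l_2,\cosh l_3$ and inverting the resulting Gram relation.

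The main obstacle is the uniform treatment of the dual laws. The direct laws are immediate once the model is fixed, but the edge-from-angle identities are of genuinely different character across the three geometries: the spherical dual is clean via the polar triangle, the hyperbolic dual requires either the de Sitter polar construction or a careful sign-tracking inversion of the Gram matrix of the indefinite form, and the Euclidean ``dual'' degenerates entirely, since similar triangles share all angles but not their edges. In particular the right-hand side of the second identity in $(a)$ is scale-invariant while $\cos l_i$ is not, so that formula cannot hold literally and must be read as encoding the angle-sum relation rather than a true edge-from-angle law. Organizing all stated formulas as specializations of a single inversion of the vertex Gram matrix is the delicate bookkeeping step.
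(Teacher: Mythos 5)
The paper offers no proof of this proposition at all: it appears in Appendix \ref{AppA} as a list of classical trigonometric facts, stated without argument, so there is no internal argument to compare yours against. Your proposal is a correct and standard way to actually establish these laws: the planar vector computation for $(a)$, the unit-sphere tangent-vector computation plus polar duality for $(c)$, and the hyperboloid-model computation in Minkowski space for $(b)$ are all sound, and the projection norms ($\sin l_k$, $\sinh l_k$) and inner-product expansions you indicate do collapse to exactly the stated numerators and denominators. Beyond supplying a proof where the paper supplies none, your write-up catches genuine misprints in the statement, which is worth recording: in $(c)$ the left side of the dual law should read $\cos l_i$, not $\cosh l_i$; and in $(a)$ the printed dual law cannot be literally true, since its right-hand side is invariant under scaling the triangle while $\cos l_i$ is not --- the intended statement is the degenerate Euclidean dual law $\cos\theta_i+\cos\theta_j\cos\theta_k=\sin\theta_j\sin\theta_k$ (equivalent to $\theta_1+\theta_2+\theta_3=\pi$), so both the printed left side and the printed denominator $\cos\theta_j\cos\theta_k$ are misprints. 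Your scale-invariance observation pins this down cleanly. The only place your sketch is thinner than a finished proof is the hyperbolic dual law, where you name two viable routes (de Sitter polars, or inverting the Gram relation among $\cosh l_1,\cosh l_2,\cosh l_3$) without carrying either out; both are routine sign-bookkeeping and entirely standard, so this is a matter of exposition rather than a gap.
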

\begin{proposition}[Tangent law]\label{tan1}
Let $l_{1}$, $l_{2}$, $l_{3}$ be three edges of a triangle and $\theta_{1}$, $\theta_{2}$, $\theta_{3}$ are the corresponding inner angles. Set
\[h_{i}=\cos \alpha_{i}\coth\frac{l_{i}}{2},\ h'_{i}=r_{i}\tan\frac{\theta_{i}}{2},\ s_{i}=\cos \alpha_{i}\cot\frac{l_{i}}{2},\ s'_{i}=\sinh r_{i}\tan\frac{\theta_{i}}{2},\]
where $i\in\{1,2,3\}$ and $\alpha_{i}=\frac{1}{2}(\theta_{i}-\theta_{j}-\theta_{k})$, $r_{i}=\frac{1}{2}(l_{j}+l_{k}-l_{i})$. Then $h_{i}$, $h'_{i}$ (resp. $s_{i}$, $s'_{i}$) are positive numbers independent of indices in hyperbolic (resp. spherical) geometry.
\end{proposition}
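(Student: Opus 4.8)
The plan is to show that each listed quantity has an index\nobreakdash-independent \emph{square} which is a symmetric function of the triangle's data, and then to fix its sign so that the positive square root is itself index\nobreakdash-independent. Every identity needed comes from feeding the cosine law into the half\nobreakdash-angle formulas $\sin^2\frac{x}{2}=\frac{1-\cos x}{2}$, $\cos^2\frac{x}{2}=\frac{1+\cos x}{2}$ together with the sum\nobreakdash-to\nobreakdash-product rules.

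First I would treat the quantities built from $\tan\frac{\theta_i}{2}$. Starting from the cosine law for $\cos\theta_i$ and writing $r_i=\frac12(l_j+l_k-l_i)$, $s=\frac12(l_1+l_2+l_3)$, the sum\nobreakdash-to\nobreakdash-product step collapses the numerators of $1\mp\cos\theta_i$ into products of (hyperbolic or trigonometric) sines of the $r$'s and of $s$, yielding
\[
\tan^2\tfrac{\theta_i}{2}=\frac{\sinh r_j\,\sinh r_k}{\sinh s\,\sinh r_i}\ \ (\text{hyperbolic}),\qquad
\tan^2\tfrac{\theta_i}{2}=\frac{\sin r_j\,\sin r_k}{\sin s\,\sin r_i}\ \ (\text{spherical}),
\]
with the Euclidean version $\tan^2\frac{\theta_i}{2}=\frac{r_jr_k}{s\,r_i}$ obtained identically. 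Multiplying by $\sinh^2 r_i$ (resp.\ $\sin^2 r_i$, resp.\ $r_i^2$) clears the asymmetric factor and leaves $\frac{\sinh r_1\sinh r_2\sinh r_3}{\sinh s}$ (resp.\ the spherical and Euclidean analogues), manifestly independent of $i$.

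For the quantities built from $\cos\alpha_i$ and $\coth\frac{l_i}{2}$ (resp.\ $\cot\frac{l_i}{2}$) I would run the same computation on the \emph{dual} (second) cosine law, which expresses $\cosh l_i$ (resp.\ $\cos l_i$) through the angles. With $\alpha_i=\frac12(\theta_i-\theta_j-\theta_k)$ and $\zeta=-\frac12(\theta_i+\theta_j+\theta_k)$, the sum\nobreakdash-to\nobreakdash-product step turns $\cosh l_i\mp 1$ into $2\cos\zeta\cos\alpha_i$ and $2\cos\alpha_j\cos\alpha_k$ (sharing the factor $\sin\theta_j\sin\theta_k$), so that
\[
\tanh^2\tfrac{l_i}{2}=\frac{\cos\zeta\,\cos\alpha_i}{\cos\alpha_j\,\cos\alpha_k},\qquad\text{hence}\qquad
\cos^2\alpha_i\,\coth^2\tfrac{l_i}{2}=\frac{\cos\alpha_1\cos\alpha_2\cos\alpha_3}{\cos\zeta},
\]
again symmetric in $i$; the spherical case is identical with $\cos\zeta$ replaced by $-\cos\frac{\theta_1+\theta_2+\theta_3}{2}$. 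This dovetails with the identities already recorded in the proof of Lemma \ref{hypermatrix}, which I can cite rather than rederive.

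Finally, positivity. Each $r_i>0$ by the triangle inequality (and $r_i,s<\pi$ in the spherical case, keeping the sines positive), while $\tan\frac{\theta_i}{2}>0$ since $\theta_i\in(0,\pi)$, so the $\tan\frac{\theta_i}{2}$\nobreakdash-quantities are positive. For $\cos\alpha_i$ I would invoke the triangle inequality of the polar triangle, namely $\theta_j+\theta_k-\theta_i<\pi$, which forces $\alpha_i\in(-\frac{\pi}{2},\frac{\pi}{2})$ and hence $\cos\alpha_i>0$; the sign of $\cos\zeta$ (resp.\ $-\cos\frac{\theta_1+\theta_2+\theta_3}{2}$) is then fixed by the angle\nobreakdash-sum bound $\sum\theta_i<\pi$ in hyperbolic (resp.\ $\sum\theta_i>\pi$ in spherical) geometry. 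With the sign settled, index\nobreakdash-independence of the square upgrades to index\nobreakdash-independence of the quantity itself. The one genuinely delicate point is the spherical positivity of $\cos\alpha_i$: the crude bounds $\theta_i\in(0,\pi)$ alone permit $\cos\alpha_i<0$, and it is exactly the polar triangle inequality that rules this out, so that step is where the argument has to be made carefully; the remainder is routine trigonometric bookkeeping.
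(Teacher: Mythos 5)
Your proof is correct; note that the paper itself states Proposition \ref{tan1} in Appendix \ref{AppA} without any proof, so there is no argument of the authors to compare against, and your derivation fills a genuine gap. The identities you obtain are right: sum-to-product applied to $1\mp\cos\theta_i$ via the first cosine law gives $\tan^{2}\tfrac{\theta_i}{2}=\sinh r_j\sinh r_k/(\sinh s\,\sinh r_i)$ (and its Euclidean and spherical analogues), while the same manipulation on $\cosh l_i\mp1$ via the second cosine law gives $\tanh^{2}\tfrac{l_i}{2}=\cos\zeta\cos\alpha_i/(\cos\alpha_j\cos\alpha_k)$, and these are exactly the identities implicitly used in the proof of Lemma \ref{hypermatrix}. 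Squaring to a manifestly symmetric expression and then fixing the sign is a legitimate way to conclude, and you correctly isolate the only nontrivial sign issue: $\cos\alpha_i>0$ is automatic in hyperbolic geometry from $\sum\theta_i<\pi$ but in spherical geometry requires the polar triangle inequality $\theta_j+\theta_k-\theta_i<\pi$. Two small remarks. First, a slightly shorter route avoids squaring altogether: $r_i\tan\tfrac{\theta_i}{2}$, $\sinh r_i\tan\tfrac{\theta_i}{2}$ and $\sin r_i\tan\tfrac{\theta_i}{2}$ are respectively the inradius, $\tanh$ of the inradius and $\tan$ of the inradius of the triangle (the incircle touches the side $l_j$ at distance $r_i$ from the vertex $i$), which gives index-independence and positivity simultaneously; the dual quantities admit an analogous interpretation. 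Second, be aware that as the proposition is actually invoked in the paper (in the proofs of Lemma \ref{form} and Lemma \ref{cpcon}), the quantity $r_i\tan\tfrac{\theta_i}{2}$ is needed in \emph{Euclidean} geometry and $\sinh r_i\tan\tfrac{\theta_i}{2}$ in \emph{hyperbolic} geometry, so the ``hyperbolic (resp.\ spherical)'' grouping in the statement is imprecise; your proof, which treats the $\tan\tfrac{\theta_i}{2}$ family in all three geometries, covers everything that is actually used.
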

\section{Basic results about closed forms}\label{AppB}
In this section, we give a simple introduction to some results on differential forms and convex functions constructed by differential forms. One refers to \cite{Luo3,Bo3} for more background.

A differential 1-form $\omega=\sum^{n}_{i=1}f_{i}(x)\mathrm{d}x_{i}$ is said to be continuous in an open set $U\subset\mathbb{R}^{n}$ if each $f_{i}(x)$ is continuous on $U$. A continuous 1-form is called closed if $\int_{\partial\gamma}\omega=0$ for each piecewise $C^{1}$-smooth null homologous loop $\gamma$ in $U$.

\begin{proposition}\label{B1}
Suppose $X$ is an open set in $\mathbb{R}^{n}$ and $A\subset X$ is an open subset bounded by a smooth $(n-1)$-dimensional submanifold in $X$. If $\omega=\sum^{n}_{i=1}f_{i}(x)\mathrm{d}x_{i}$ is a continuous 1-form on $X$ such that $\omega|_{A}$ and $\omega|_{X-\overline{A}}$ are closed where $\overline{A}$ is the closure of $A$ in $X$, then $\omega$ is closed in $X$.
\end{proposition}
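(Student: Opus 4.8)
The plan is to reduce global closeness to a purely local statement and then dispose of the three positional cases for a point of $X$. Recall that for continuous $1$-forms the closeness condition (vanishing of $\int_\gamma\omega$ over null-homologous loops) is local-to-global: any null-homologous loop $\gamma$ in $X$ bounds a singular $2$-chain, which I would subdivide finely enough that every small simplex lies in a neighborhood on which $\omega$ is closed; summing $\int_{\partial\sigma}\omega=0$ over the simplices telescopes to $\int_\gamma\omega=0$. Thus it suffices to prove that every point $p\in X$ has a neighborhood on which $\omega$ is closed. For $p\in A$ or $p\in X\setminus\overline{A}$ this is immediate from the hypotheses, since these sets are open. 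The only real content is at a point $p$ of the separating hypersurface $\Sigma:=\partial A\cap X$.

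Near such a $p$, the plan is to flatten $\Sigma$. Since $\Sigma$ is a smooth $(n-1)$-dimensional submanifold bounding $A$, there is a neighborhood $U$ of $p$ and a diffeomorphism $\Phi\colon U\to B$ onto a box $B=(-\varepsilon,\varepsilon)^n$ carrying $\Sigma\cap U$ to $\{x_n=0\}$ and $A\cap U$ to the half-box $\{x_n>0\}$. Because the integral-closeness condition is invariant under diffeomorphism (reparametrizing loops by $\Phi$), the pushed-forward form $(\Phi^{-1})^*\omega$ is again a continuous $1$-form on $B$, closed on $\{x_n>0\}$ and on $\{x_n<0\}$, and it suffices to prove it closed on $B$. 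So I may assume from the start that $\Sigma$ is the flat slice $\{x_n=0\}$ inside $B$.

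On the convex half-box $\{x_n>0\}$ the form $\omega$ is closed, so by the Poincar\'{e} lemma it is exact there; fixing a base point I define $F_+(x)=\int\omega$ along any path in the closed upper half-box. Path independence in the open half (convexity) together with continuity of the coefficients $f_i$ up to $\{x_n=0\}$ makes $F_+$ of class $C^1$ on the closed upper half with $dF_+=\omega$, and likewise $F_-$ on the closed lower half. Restricting to the connected interface $\{x_n=0\}$, both $F_+$ and $F_-$ have tangential differential $\sum_{i<n}f_i\,dx_i$, so $F_+-F_-$ is constant there; after subtracting this constant from $F_-$ I may assume $F_+=F_-$ on $\{x_n=0\}$ and glue them into a single continuous function $F$ on $B$.

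It remains to check that $F$ is $C^1$ across the interface, for then $\omega=dF$ is exact, hence closed, on $B$, which completes the local argument. The tangential partials $\partial F/\partial x_i=f_i$ ($i<n$) are already continuous across $\{x_n=0\}$. For the normal direction I would invoke the elementary one-variable fact that a continuous function, differentiable off a point with a derivative that extends continuously across it, is differentiable there with that value; applied along each normal line this gives $\partial F/\partial x_n=f_n$ at interface points, using precisely that $f_n$ has equal one-sided limits there by continuity of $\omega$. Hence all partials of $F$ exist and equal the continuous functions $f_i$, so $F\in C^1(B)$ and $dF=\omega$. The hard part is exactly this interface step: the whole point is that continuity of $\omega$ across $\Sigma$ rules out any jump in the coefficients that would otherwise obstruct matching the two one-sided primitives into a single differentiable one.
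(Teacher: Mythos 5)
The paper itself does not prove Proposition \ref{B1}: it is stated in Appendix \ref{AppB} as imported background, with the reader sent to Luo's \emph{Rigidity of polyhedral surfaces, III} and to Bobenko--Pinkall--Springborn for the argument. So there is no in-paper proof to compare against, and your proposal has to stand on its own. It does. The reduction of the integral notion of closedness to a local statement by subdividing a bounding $2$-chain is standard and correct; the only points of $X$ needing work are those on the interface $\Sigma=\partial A\cap X$, and your treatment there is sound: flatten $\Sigma$ by a chart (using that pullback by a $C^1$ diffeomorphism preserves both continuity of the coefficients and the loop-integral characterization of closedness), produce one-sided primitives $F_{\pm}$ on the two convex half-boxes via path integration, extend them continuously to the interface, observe that their restrictions to the connected slice $\{x_n=0\}$ have the same tangential differential $\sum_{i<n}f_i\,\mathrm{d}x_i$ and hence differ by a constant, glue, and finally verify $C^1$-ness of the glued $F$ across the interface --- tangential partials directly, the normal partial by the mean-value-theorem lemma, both using exactly the continuity of the $f_i$ across $\Sigma$. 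This yields $\omega=\mathrm{d}F$ locally, hence local and then global closedness. Two small points you should make explicit if you write this up: (i) the hypothesis that $A$ is \emph{bounded by} $\Sigma$ is what guarantees that, after flattening, $B\setminus\{x_n=0\}$ splits into pieces on each of which $\omega$ is closed (whether a given side lies in $A$ or in $X-\overline{A}$), and (ii) the identity $F_{+}(y)-F_{+}(x)=\int_{[x,y]}\omega$ for $x,y$ on the interface needs a one-line limiting argument (push the segment to height $\varepsilon>0$ and use uniform continuity of $\omega$ on compacta) before you can differentiate it tangentially. Neither is a gap, just a place to be careful. This route, via an explicitly glued $C^1$ primitive, is arguably cleaner than the usual argument (which splits a small rectangle along the flattened hypersurface and checks that the two boundary integrals cancel by approximation), since it simultaneously delivers local exactness rather than just the vanishing of boundary integrals.
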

\begin{proposition}\label{B2}
Suppose $X\subset\mathbb{R}^{n}$ is an open convex set and $A\subset X$ is an open subset of $X$ bounded by a codimension-1 real analytic submanifold in $X$. If $\omega=\sum^{n}_{i=1}f_{i}(x)\mathrm{d}x_{i}$ is a continuous closed 1-form on $X$ such that $F(x)=\int^{x}_{a}\omega$ is locally convex in $A$ and in $X-\overline{A}$, then $F(x)$ is convex in $X$.
\end{proposition}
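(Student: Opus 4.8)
The plan is to reduce global convexity of $F$ on the convex set $X$ to one-dimensional convexity of $F$ along each straight segment, and then to glue the piecewise convexity supplied by the hypothesis across the analytic boundary using the $C^1$-regularity of the potential. First I would record that, since $\omega=\sum_i f_i\,\mathrm{d}x_i$ is continuous and closed on the simply connected set $X$, the potential $F(x)=\int_a^x\omega$ is well-defined and of class $C^1$, with $\partial F/\partial x_i=f_i$ continuous. Fixing two points $p,q\in X$ and writing $\ell(t)=(1-t)p+tq$ and $g(t)=F(\ell(t))$ for $t\in[0,1]$, it suffices to prove that $g$ is convex on $[0,1]$, since convexity of $F$ on $X$ is equivalent to this holding for every choice of $p,q$.

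Next I would exploit the real analyticity of the boundary $M=\partial A$. Covering the compact segment $\ell([0,1])$ by finitely many charts in which $M$ is the zero set of a real analytic function, the restriction of each such function to the line is a real analytic function of the single variable $t$; unless it vanishes identically, it has only isolated zeros, hence finitely many on a compact interval. Setting aside the degenerate possibility that a subarc of the segment lies in $M$, this yields a finite partition $0=t_0<t_1<\cdots<t_m=1$ such that each open subinterval $(t_{j},t_{j+1})$ avoids $M$. Being connected and disjoint from $\partial A$, the corresponding open subarc lies entirely in $A$ or entirely in $X-\overline{A}$, where $F$ is locally convex by hypothesis; hence $g$ is convex on $[t_j,t_{j+1}]$, equivalently $g'$ is nondecreasing there.

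Finally I would glue the pieces. Because $F\in C^1$, the derivative $g'(t)=\nabla F(\ell(t))\cdot(q-p)$ is continuous on all of $[0,1]$. A continuous function that is nondecreasing on each closed piece of a finite partition is nondecreasing on the whole interval, so $g'$ is nondecreasing on $[0,1]$ and therefore $g$ is convex there. As $p,q\in X$ were arbitrary, $F$ is convex on $X$. The main obstacle I anticipate is precisely the analytic-geometry step: justifying that the segment meets $M$ in only finitely many points, via the one-variable restriction of the local analytic defining functions together with a finite subcover, and correctly disposing of any degenerate subarcs lying inside $M$, where the continuity of $g$ and of $g'$ must be invoked to absorb such arcs into the monotonicity argument rather than letting them break it.
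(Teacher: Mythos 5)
The paper itself offers no proof of Proposition \ref{B2}: it is quoted from the references \cite{Luo3,Bo3} (it is essentially Lemma 2.3 of Luo's \emph{Rigidity of polyhedral surfaces, III}), and your argument follows the same standard route found there --- reduce to convexity of $g(t)=F(\ell(t))$ along segments, use real analyticity of $\partial A$ to cut the segment into finitely many open subarcs each lying in $A$ or in $X-\overline{A}$, and glue the piecewise monotonicity of $g'$ using the $C^{1}$-regularity of $F$, which you correctly extract from the continuity and closedness of $\omega$. For segments meeting $M=\partial A$ in finitely many points this part of your argument is complete and correct.

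The genuine gap is the degenerate case you set aside: a subarc $\ell([t_{1},t_{2}])$ of positive length contained in $M$. You propose to ``absorb'' such arcs using continuity of $g$ and $g'$, but this cannot work. On such a subarc the hypotheses supply no convexity information whatsoever, since $F$ is assumed locally convex only on $A$ and on $X-\overline{A}$ and $\partial A$ meets neither; and a $C^{1}$ function whose derivative is nondecreasing on $[0,t_{1}]$ and on $[t_{2},1]$ can still have $g'$ strictly decreasing on $(t_{1},t_{2})$ --- continuity of $g'$ at $t_{1}$ and $t_{2}$ does not forbid this. The case is not vacuous, because a segment can lie inside a codimension-$1$ hypersurface (e.g.\ when $M$ is locally a hyperplane). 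The standard repair is an approximation argument rather than a gluing argument: first prove the inequality $F(\lambda p+(1-\lambda)q)\le\lambda F(p)+(1-\lambda)F(q)$ for the dense set of pairs $(p,q)$ whose segment meets $M$ in only finitely many points (real analyticity shows that a segment meeting $M$ infinitely often contains a subarc of $M$, and such segments can be perturbed transversally off $M$), and then pass to the limit for arbitrary $(p,q)$ using the continuity of $F$ on $X$, since the convexity inequality is closed under limits of the endpoints. With that substitution for the degenerate segments, your proof is complete.
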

\end{appendices}
\section{Acknowledgement}
The authors would like to thank  Ze Zhou, for his encouragement and helpful discussions. They also would like to thank NSF of China (No.11631010) and Postgraduate Scientific Research Innovation Project of Hunan Province  (CX20210397) for financial support.

\Addresses
\end{document}